%% file: main.tex
\documentclass{amsart}
\usepackage[english]{babel} % Language setting

\usepackage[letterpaper,top=2cm,bottom=2cm,left=3cm,right=3cm,marginparwidth=1.75cm]{geometry}
\usepackage{multicol}
\usepackage[]{caption}
\usepackage[]{subcaption} 
\usepackage{enumitem}
\usepackage{xstring} % for string-substitutions
\usepackage{amssymb}
\usepackage{mathtools}
\usepackage[mathscr]{euscript}
\usepackage{bbm} % (extra blackboard bold letters) try \mathbbm{Aa123}

\usepackage{ifthen}  % needed for `MaxmeanderLower`

\usepackage{graphicx}
\usepackage{tikz}
\tikzset{x=.75cm,y=.75cm}
\usetikzlibrary{cd, arrows, calc, shapes.geometric}

\usepackage[%
    colorlinks,
    linkcolor=blue!80, 
    citecolor=green!65!black, 
    urlcolor=magenta!85!black,
    ]{hyperref}

\definecolor{myred}{rgb}{.8,.1,.1}
\definecolor{myblue}{rgb}{.1,.1,.8}
\definecolor{mygreen}{rgb}{.1,.8,.1}
\definecolor{mygold}{rgb}{0.95,0.8,0}

\definecolor{eigpink}{RGB}{220,30,150}
\definecolor{eigcyan}{RGB}{0,185,215}
\definecolor{eiglav}{RGB}{155,125,215}

\newcommand{\demph}[1]{\textcolor{myblue}{\emph{#1}}}
\renewcommand{\demph}[1]{\emph{#1}}

\def\ba{\mathbf{a}}
\def\bb{\mathbf{b}}
\def\bc{\mathbf{c}}
\def\be{\mathbf{e}}

\newcommand{\bs}[1]{\boldsymbol{#1}}

\def\fg{\mathfrak g} % a Lie algebra
\def\fp{\mathfrak p} % a (bi)parabolic subalgebra
\def\cE{\mathcal E} % edges
\def\cJ{\mathcal J}

\def\ZZ{\mathbb Z}

\def\CC{\mathbb C}
\def\sl{\mathfrak{sl}}
\def\gl{\mathfrak{gl}}
\def\L{\mathfrak L}
\def\valpha{\boldsymbol{\alpha}}
\def\vbeta{\boldsymbol{\beta}}

\def\Ooms{\Lambda} % (Ooms spectrum)

\def\hatF{\widehat{F}}

\newcommand{\pab}[1][a,b]{\fp(#1)}
\newcommand{\puab}[1][a,b]{\fp\underline{(#1)}}
\def\sln{\sl_n}
\def\gln{\gl_n}
\newcommand{\Top}[1]{#1^{+}}
\newcommand{\Bot}[1]{#1^{-}}
\newcommand{\Jfull}[1]{J_{#1}}
\newcommand{\Jtop}[1]{J_{#1}^+}
\newcommand{\Jbot}[1]{J_{#1}^-}
\newcommand{\Rfull}[1]{R_{#1}}
\newcommand{\Rtop}[1]{R_{#1}^+}
\newcommand{\Rbot}[1]{R_{#1}^-}

\def\dimvec{\mathop{\underline{\mathrm{dim}}}}

\def\ad{\operatorname{ad}}
\def\rnk{\operatorname{rnk}}
\def\col{\operatorname{col}}

\def\qand{\quad\hbox{and}\quad}

\newtheorem{thm}{Theorem}[section]
\newtheorem{conj}{Conjecture}[section]

\def\pL{\mathsf L}
\def\pR{\mathsf R}
\def\pU{\mathsf U}
\def\pD{\mathsf D}

\newtheorem{theorem}{Theorem}%[section]
\newtheorem{corollary}[theorem]{Corollary}
\newtheorem{lemma}[theorem]{Lemma}
\newtheorem{proposition}[theorem]{Proposition}

\theoremstyle{definition}
\newtheorem{remark}[theorem]{Remark}
\newtheorem{example}[theorem]{Example}%[section]

\input{custom_tikz_methods} % READ ITS HEADER FOR INSTRUCTIONS!!

\title{Fence Posets, Good Gradings and Frobenius Maximal Parabolics}

\author{Anthony Giaquinto}
\address{Department of Mathematics and Statistics,
		Loyola University Chicago, Chicago, IL 60660 USA}
\email{agiaqui@luc.edu}
\author{John Irving}
\address{Department of Mathematics and Computing Science, Saint Mary's University, 923 Robie St, Halifax, NS, B3H3C3 Canada}
\email{john.irving@smu.ca}
\author{Aaron Lauve}
\address{Department of Mathematics and Statistics,
		Loyola University Chicago, Chicago, IL 60660 USA}
\email{alauve@luc.edu}
\author{Mitja Mastnak}
\address{Department of Mathematics and Computing Science, Saint Mary's University, 923 Robie St, Halifax, NS B3H3C3 Canada}
\email{mitja.mastnak@smu.ca}
\thanks{Mitja Mastnak was supported in part by NSERC (Canada).}

\subjclass{%
17B05, %Structure theory for Lie algebras and superalgebras
17B70, %Graded Lie (super)algebras
05E16, %Combinatorial aspects of groups and algebras
06A07. %Combinatorics of partially ordered sets
}

\keywords{%
Frobenius Lie algebras, 
fence posets, good gradings, Calkin--Wilf tree, 
$q$-rational numbers.
}

\begin{document}

\begin{abstract}
Let $\L$ be a Frobenius maximal parabolic subalgebra of $\sln$. For any
$F\in\L^*$ for which the Kirillov form
$B_F(x,y)=F([x,y])$ is non-degenerate, let $\widehat F$ denote the
associated principal element. We prove that the multiplicities of the
eigenvalues of $\operatorname{ad}_{\widehat F}$ on $\L$ form a
unimodal sequence symmetric about $\frac12$. We also prove that the multiplicities of the
eigenvalues of $\operatorname{ad}_{\widehat F}$ on $\gln$ form a
unimodal sequence symmetric about $0$. The proof relates the ranked meander associated to $\L$ to the order ideals of a related fence poset through Panyushev reduction. The known unimodality of the rank polynomial of the fence poset implies that of the meander, which in turn determines a good grading of $\gln$ in the sense of Elashvili and Kac. We prove that this grading coincides with that induced by the principal element and that the pyramid associated to this grading may be filled in such a way that its good element $e$ lies in $\L$. The two unimodality results then follow from the injectivity properties of $\ad_e$ coming from the good grading and the duality induced by the bilinear form $B_F$.
\end{abstract}

\maketitle

%%%%%%%%%%%%%%%%%%%%%%%%%%%%
\section{Introduction}
\label{sec:intro}
%%%%%%%%%%
	Let $\L$ be a finite-dimensional Lie algebra over a field of characteristic zero. For $F\in \L^*$, the Kirillov form is the skew-symmetric bilinear form on $\L$ given by $B_F(x,y)=F([x,y])$. The index of $\L$ is the minimum dimension of the kernel of $B_F$ as $F$ ranges over $\L^*$. The Lie algebra $\L$ is \emph{Frobenius} if the index is zero or, equivalently, if its coadjoint representation has an open orbit in $\L^*$. In this case, every $F$ in this orbit determines a non-degenerate form $B_F$, and such $F$ are called Frobenius functionals. Thus Frobenius Lie algebras are an important class in the study of the index and the geometry of coadjoint orbits. The terminology ``Frobenius Lie algebra'' was introduced by Ooms in
	\cite{ooms1980frobenius}. Simple Lie algebras are never Frobenius as the index is equal to the rank, while the existence of the non-degenerate form $B_F$ gives Frobenius Lie algebras additional structure not present  in arbitrary non-semisimple algebras. 
	
	An important source of examples lies within the class of \emph{biparabolic} subalgebras of simple Lie algebras. In type $A$ biparabolic subalgebras of $\sln$ are also called \emph{seaweed} algebras, and we use the terms {biparabolic} and {seaweed} interchangeably throughout. In \cite{dergachev2000index}, a simple combinatorial formula for the index of a type $A$ seaweed Lie algebra was given in terms of its meander graph. Moreover, the meander determines an explicit functional realizing the index and so in the Frobenius case it provides a non-degenerate form $B_F$. The meander construction is closely related to Kostant's cascade of strongly
	orthogonal roots, which plays an important role in the study of biparabolic subalgebras of simple Lie algebras, see \cite{panyushev2023combinatorial}.
    	 
	For a Frobenius functional $F$, the non-degeneracy of $B_F$ provides an isomorphism $\L \simeq \L^*$ and thus there is a distinguished element $\hatF \in \L$ corresponding to $F$ under this isomorphism. The element $\hatF$ was first studied by Ooms 
	\cite{ooms1980frobenius}, who remarked that $\hatF$ and the spectrum of $\ad_{\hatF}$ should play an important role 
	in the study of $\L$. In \cite{panyushev2023combinatorial}, $\hatF$ is 
	called the Ooms element of $\L$, but we follow the terminology of 
	\cite{gerstenhaber2009principal} and call it the {\it principal element} 
	of $\L$. Ooms showed in \cite{ooms1980frobenius} that, for an arbitrary Frobenius Lie algebra,
	the eigenvalues of $\ad_{\hatF}$, counted with
	multiplicity, are symmetric about $\frac12$. In general, however, the
	eigenvalues need not be integers. For Frobenius biparabolic subalgebras
	of $\sln$, the spectrum has a considerably richer structure. Gerstenhaber and Giaquinto showed in \cite{gerstenhaber2009principal} that the principal element
	is semisimple and that the eigenvalues of $\ad_{\hatF}$ on $\gln$, and hence on $\L$, are integers and can be computed directly from
	the meander. More generally, Joseph proved that the integrality property holds for
every Frobenius biparabolic subalgebra of a semisimple Lie algebra
\cite{joseph2015integrality}. (In his terminology, the pair
$(\hatF,F)$ arising here is an ``adapted pair''.) Besides being integral in the case of seaweed algebras, Coll, Hyatt and Magnant showed in \cite{coll2018unbroken} that these eigenvalues form an unbroken string of integers. 
	
	As the eigenvalues of a Frobenius seaweed algebra are integers centered around $\frac{1}{2}$, it is natural to ask how these are distributed. In a 2009 email to Gerstenhaber and Giaquinto, Duflo asked whether the multiplicities of the eigenvalues of
	a principal element of a Frobenius seaweed algebra are unimodal. Computational evidence on thousands of examples suggests that this is always the case. The computations also reveal a related unimodality property for the action of the principal element on $\gln$. Based on this evidence, we are led to the following conjecture:
	 \begin{conj}\label{conj}
Suppose $\L$ is a Frobenius seaweed subalgebra of $\sln$ with principal
element $\hatF$. Then
\begin{enumerate}
    \item the multiplicities of the eigenvalues of $\ad_{\hatF}$ on $\L$
    form a unimodal sequence around $\frac12$, and
    \item the multiplicities of the eigenvalues of $\ad_{\hatF}$ on
    $\gln$ form a unimodal sequence around $0$.
\end{enumerate}
\end{conj}
Except in some specialized cases treated by Mayers and Russoniello
\cite{mayers2026seaweed}, Conjecture~\ref{conj} remains open.

The main result of this paper is a proof of Conjecture~\ref{conj} in the case $\L$ is a maximal parabolic Frobenius subalgebra of $\sln$. 
While the eigenvalues and their multiplicities can be computed directly from the meander, the complexity of the problem grows considerably as $n$ increases and so making a direct calculation is quite difficult. 

Our approach in the maximal parabolic case (developed in Sections \ref{sec:panyushev} and \ref{sec:fence posets}) is inductive and relies heavily on the rank polynomial of the meander, whose coefficients count the number of its vertices at each rank. The reduction process is the Panyushev reduction introduced in \cite{panyushev2001inductive} that inductively computes the index of biparabolic subalgebras of $\sln$. When applied to the Frobenius maximal parabolic case, this recursive process surprisingly agrees with a related recursion for the \emph{order ideals of fence posets}. The rank polynomials of these order ideal posets were conjectured to be unimodal by Morier-Genoud and Ovsienko in \cite{morier2020q-deformed}. This conjecture was recently proved by Oguz and Ravichandran, see \cite{kantarci2023rank}, and so the rank polynomial of the Frobenius maximal parabolic meander is also unimodal.

The unimodality of the rank polynomial of the meander allows us to invoke the theory of good gradings of $\gln$ in the sense of Elashvili and Kac, see \cite{elashvili2005classification}. A \emph{good grading} on $\gln$ comes equipped with a nilpotent element $e\in \gln$ such that $\ad_e$ acts injectively on negative weight spaces of the grading. The element $e$ is constructed from an Elashvili--Kac pyramid corresponding to the rank polynomial of the meander. The pyramid provides the grading of $\gln$, and we prove that this grading coincides with that given by the principal element. The desired unimodality result for $\gln$ then follows immediately. 

To prove the unimodality on a maximal parabolic Frobenius subalgebra of $\sln$, we show (in Section~\ref{sec:good gradings}) that the pyramid admits a filling such that the associated element $e$ lies in the maximal parabolic subalgebra in question. Hence, the injectivity of $\ad_e$ on all of the negative weight spaces of $\gln$ restricts to injectivity on those of a maximal parabolic subalgebra. This, combined with the duality coming from the bilinear form $B_F$, establishes the desired unimodality result for maximal parabolic Frobenius subalgebras of $\sln$.

This paper is organized as follows. In Section~\ref{sec:frobenius} we establish notation and recall basic definitions and results about Frobenius biparabolic subalgebras of $\sln$, their meanders and their principal elements. We show how the eigenvalues of a principal element can be read off from the meander. In Section~\ref{sec:panyushev} we specialize to maximal parabolics and recall the Panyushev reduction process and its effect on their meanders. Section~\ref{sec:fence posets} introduces the necessary facts about fence posets and their order ideals, and shows that the rank polynomial of a Frobenius maximal parabolic meander coincides with the rank polynomial of the order ideal poset of an associated fence poset. In Section~\ref{sec:good gradings}, we use this connection together with the Elashvili--Kac theory of good gradings of $\gln$ to prove the maximal parabolic case of Conjecture~\ref{conj}. Finally, in Section~\ref{sec:further remarks} we discuss several further phenomena suggested by computations, both for maximal parabolics and for more general Frobenius biparabolic subalgebras.

%%%%%%%%%%%%%%%%%%%%%%%%%%%%
\section{Frobenius Biparabolic Lie Algebras and Meanders}
\label{sec:frobenius}
%%%%%%%%%%

\subsection{Biparabolic subalgebras and meanders}
\label{subsec:biparabolic-meanders}
%%%%%%%%%%
We retain the terminology concerning Frobenius Lie algebras from the introduction. We now recall the type $A$ biparabolic, or seaweed, subalgebras of $\sln$ and
their associated meander graphs. Following 
\cite[Definition 4.1]{dergachev2000index}, a biparabolic, or seaweed,
subalgebra of $\sln$ is determined by two compositions
\[
\valpha=(\alpha_1,\ldots,\alpha_r)
\qquad\text{and}\qquad
\vbeta=(\beta_1,\ldots,\beta_s)
\]
of $n$ and defined as follows. Let $e_1,\ldots, e_n$ be the standard basis of $\CC^n$ and set
\[V_i = \operatorname{span} (e_1,\ldots,e_{\beta_1+\cdots+\beta_i}) \quad \text{and}\quad W_j=\operatorname{span}
(e_{\alpha_1+\cdots+\alpha_j+1},\ldots,e_n).
\]
The subalgebra of $\sln$ preserving $V_i$ and $W_j$ for all $i=1,\ldots, s-1$ and $j=1,\ldots, r-1$ is a \demph{biparabolic} subalgebra and is denoted $\L(\valpha\mid\vbeta)$. A basis-free description of biparabolic subalgebras can be found in \cite{panyushev2001inductive}.

Dergachev and Kirillov associate to each $\L(\valpha\mid\vbeta)$ a certain directed graph, the \demph{meander}, which we now describe. The vertices of the meander are points $1,\ldots, n$ on a horizontal line from left to right. Partition the vertices into  blocks of sizes $\alpha_1,\ldots, \alpha_r$. Within each block, create non-crossing arcs above the line directed left to right connecting its first vertex to its last, its second to its second to last, etc. Next take a second partition of the vertices into blocks of size $\beta_1,\ldots, \beta_s$. Within these blocks create non-crossing arcs below the line from right to left connecting its last vertex to its first, its second to last to its second, etc. If any $\alpha_i$ or $\beta_j$ is odd, its middle vertex remains unpaired on the top or bottom, respectively. The resulting graph is the meander, denoted $M(\valpha\mid\vbeta)$. Let $\cE$ denote the set of edges in $M(\valpha\mid\vbeta)$. If $s=(i,j)$ is an edge in $M(\valpha\mid\vbeta)$ from $i$ to $j$, define $\be_s$ to be the standard matrix unit $\be_{i,j}$. Dergachev and Kirillov define the meander functional by
\[
F_{DK}
=
\sum_{s\in\cE}\be_s^*
\]
and prove that it realizes the index of $\L$, in the sense that the kernel
of the associated Kirillov form has minimal dimension.

Every component of the meander, viewed as an {undirected} graph, is a cycle, a path or an isolated point. See Figure \ref{fig:biparabolic-not-frobenius} for an illustration.  
\begin{figure}[!hbt]
\centering
	\raisebox{-.3\height}{\Shape[.7]{1/5, 2/4, 2/1, 6/3, 5/4, 6/7, 9/7, 8/9}{%
    1/1, 1/2, 1/3, 1/4,  
         2/2, 2/3,      2/5,
              3/3, 3/4, 3/5, 
              4/3, 4/4, 4/5, 
              5/3,      5/5, 
                   6/4, 6/5, 6/6,      
                                  7/7,
                                  8/7, 8/8,     
                                       9/8, 9/9}{}}
	\quad
    \quad
	\meanderD[.8]{9}{1/5, 2/4, 6/7, 8/9, 2/1, 6/3, 5/4, 9/7}
\caption{The biparabolic $\L((5,2,2) \mid (2,4,3))$ (at left) and its meander graph.}
\label{fig:biparabolic-not-frobenius}
\end{figure}

\begin{thm}[Dergachev--Kirillov {\cite[Theorem~5.1]{dergachev2000index}}]
\label{th:index-from-meander}
Let $M(\valpha\mid\vbeta)$ be the meander associated to the biparabolic subalgebra $\L(\valpha\mid\vbeta)\subseteq\mathfrak{sl}_n$. Then the index of $\L(\valpha\mid\vbeta)$ is $2c+p-1$, where $c$ is the number of cycles in $M(\valpha\mid\vbeta)$, and $p$ is the number of paths and isolated points in $M(\valpha\mid\vbeta)$.
\end{thm}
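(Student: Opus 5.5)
Since the index of $\L$ is $\dim\L-\max_{F}\operatorname{rank}B_F$, and the index is lower semicontinuous in the sense that it is attained on a Zariski-open set, the argument splits into a lower bound and a realization. To realize the index we compute the kernel of $B_{F_{DK}}$ for the meander functional directly. For the lower bound we show that no functional does better.

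For the kernel computation, we work in the basis of matrix units $\be_{i,j}$ lying in $\L(\valpha\mid\vbeta)$, together with the diagonal traceless elements. For $x\in\L$, the condition $x\in\ker B_{F_{DK}}$ reads $F_{DK}([x,y])=0$ for all $y\in\L$, which is equivalent to $\operatorname{ad}^*_x F_{DK}=0$ restricted to $\L$. We decompose $x$ into its diagonal part $h$ and its off-diagonal part. Pairing against the edge units $\be_s$ shows that $F_{DK}([h,\be_s])=(h_i-h_j)$ for an edge $s=(i,j)$. Diagonal contributions therefore force $h$ to be constant along each connected component of the meander, subject to corrections from off-diagonal terms. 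The off-diagonal entries of $x$ are then controlled by a "propagation along arcs" argument: the entries of $x$ indexed by pairs of vertices are forced to vanish unless the pairs are compatible with the arc structure.

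The key step is a careful bookkeeping which shows that each component contributes independently. A cycle contributes two dimensions: a constant on the component, and a rotational element along the cycle. A path or isolated point contributes one dimension, the constant on it. The trace-zero condition in $\sln$ then removes one dimension overall, which gives $2c+p-1$. I would organize this bookkeeping by passing first to $\gln$, where the count is $2c+p$, and then subtracting the center.

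For the lower bound, the cleanest route I would try first is induction via a reduction on $(\valpha\mid\vbeta)$ in the style of Panyushev. If $\alpha_1>\beta_1$, the seaweed $\L(\valpha\mid\vbeta)$ has the same index as a smaller seaweed whose meander is obtained by deleting the first $\beta_1$ vertices and folding. This operation preserves $c$ and $p$, so the formula transfers. The base cases are $\alpha_1=\beta_1$, where the algebra splits as a sum plus a torus correction, and the trivial meanders. Combined with the explicit functional, this proves the theorem.

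The main obstacle is the kernel computation in the middle step. Off-diagonal components of $x$ pointing "across" arcs interact with the diagonal, and one must verify that no extra kernel elements arise from long chains of arcs. I would handle this by ordering the matrix units by distance in the meander and doing triangular elimination.
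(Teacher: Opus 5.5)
The paper gives no proof of this theorem; it quotes it from Dergachev--Kirillov. So your proposal has to stand on its own, and its ``realization'' half does not.

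\textbf{The kernel computation fails.} The claim that $\ker B_{F_{DK}}$ has dimension $2c+p-1$, because the components of the meander contribute independently, is false in general once the meander has two or more cycles. Take $\L((4)\mid(4))=\mathfrak{sl}_4$. Its meander is the two $2$-cycles $\{1,4\}$ and $\{2,3\}$, so $2c+p-1=3$, the rank, as it should be. But $F_{DK}(X)=X_{14}+X_{23}+X_{41}+X_{32}=\operatorname{tr}(XJ)$ with $J$ the antidiagonal permutation matrix. Hence $\ker B_{F_{DK}}$ is the centralizer of $J$ in $\mathfrak{sl}_4$, which is $7$-dimensional because $J$ has eigenvalues $\pm1$, each with multiplicity two.

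The four surplus directions, such as $\be_{1,2}+\be_{4,3}$ and $\be_{1,3}+\be_{4,2}$, join a vertex of one cycle to a vertex of the other. They are genuine solutions, so no triangular elimination ``by distance in the meander'' will remove them.

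The underlying reason is that the diagonal torus rescales the edge coefficients by characters $t_i/t_j$. This can normalize all coefficients along a spanning forest, but it leaves one invariant per cycle, namely the oriented product of the coefficients around it. With all coefficients equal to $1$ these invariants coincide, and distinct cycles can resonate.

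The fix is to use $F=\sum_{s\in\cE}c_s\be_s^*$ with generic $c_s$. In the example, replacing $\be_{4,1}^*,\be_{3,2}^*$ by $\lambda\be_{4,1}^*,\mu\be_{3,2}^*$ with $\lambda\neq\mu$ gives a regular semisimple matrix and a $3$-dimensional kernel. Only for such $F$ can a component-by-component count work. When $c=0$, every weighting is torus-conjugate to the all-ones functional, which is why $F_{DK}$ itself is harmless in the Frobenius setting the paper actually uses.

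\textbf{The lower-bound step needs rethinking.} As you state it, Panyushev's reduction is an \emph{equality} of indices. Add invariance of $c$ and $p$ under the corresponding graph move, and the splitting case $\alpha_1=\beta_1$. In that case, up to the trace condition, the algebra is $\gl_{\alpha_1}\oplus\L(\alpha_2,\ldots\mid\beta_2,\ldots)$, and the first $\alpha_1$ vertices form $\lfloor\alpha_1/2\rfloor$ two-cycles and at most one isolated point. Together these already prove the whole formula by induction on $n$, which makes the kernel computation superfluous. The real content is then the index equality itself. It rests on Ra\"{\i}s's theorem on the index of semidirect products, and you only assert it.

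Moreover, ``delete the first $\beta_1$ vertices and fold'' is the correct move only when $\alpha_1\geq 2\beta_1$. When $\beta_1<\alpha_1<2\beta_1$, the reduction removes $\alpha_1-\beta_1$ vertices and alters the compositions differently. Compare the paper's $\L((n)\mid(a,b))\mapsto\L((b,a-b)\mid(a))$ for $a>b$, which retracts the tongue at the right end. So the invariance of $c$ and $p$ must be checked for that move separately.
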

An immediate consequence of the theorem is that $\L(\valpha\mid\vbeta)$ is Frobenius if and only if its meander is a single path. From Figure \ref{fig:biparabolic-not-frobenius}, we see that the undirected meander for $\L((5,2,2)\mid(2,4,3))$ consists of one cycle and one path, so the index is $2$, and the algebra is not Frobenius.

\subsection{Principal elements and their spectra}
\label{subsec:principal-spectrum}
%%%%%%%%%%

For an arbitrary Frobenius Lie algebra $\L$ and a Frobenius 
functional $F$, the map
\[
\phi_F:\L\longrightarrow\L^*,
\qquad
\phi_F(x)(y)=B_F(x,y),
\]
is an isomorphism. Therefore,
\[
\hatF:=\phi_F^{-1}(F)\in\L
\]
is the unique element satisfying
\[
F(y)=F([\hatF,y])
\qquad\text{for all }y\in\L.
\]
Ooms showed in \cite{ooms1980frobenius} that the eigenvalues of $\ad_{\hatF}$ and their multiplicities are independent of the choice of Frobenius functional. Thus the spectrum is an invariant of the Frobenius Lie algebra $\L$.

For the remainder of the paper, when $\L$ is a Frobenius biparabolic
subalgebra of $\sln$, we take $F=F_{DK}$ and write $\hatF$ for its associated
principal element.
We next recall the Gerstenhaber--Giaquinto construction of the principal
element and the resulting description of its eigenvalues
\cite{gerstenhaber2009principal}. 

Let $s=(i,j)$ be any edge of the meander $M$ corresponding to $\L$. Since $M$ is a single path, deleting $s$ disconnects $M$ into two components. Let $h_s$ be the sum of all elements
$\be_{k,k}-\frac{1}{n}I$ for which 
$k$ lies in the component containing $i$. Then
\[
[h_s,\be_t]=\delta_{s,t}\be_t
\qquad\text{for all }s,t\in\cE.
\]
Gerstenhaber and Giaquinto show that the elements $h_s$ are linearly
independent and that the principal element associated to $F$ is the sum
\[
\hatF=\sum_{s\in\cE}h_s.
\]

The usefulness of this description of $\hatF$ is that it translates the
computation of the eigenvalues of $\ad_{\hatF}$ directly to  paths in the meander $M$. Each directed edge contributes an eigenvalue $1$, and
reversing the direction changes the sign. Since consecutive edges of a
path correspond, via bracketing, to the matrix unit joining the
endpoints, the derivation property of $\ad_{\hatF}$ shows that the
eigenvalues add along the path. Thus every matrix unit $\be_{i,j}$ is an
eigenvector for $\ad_{\hatF}$, and if
\[
[\hatF,\be_{i,j}]=\lambda_{i,j}\be_{i,j},
\]
then $\lambda_{i,j}$ is the signed \emph{path weight} of the unique path from $i$ to
$j$ in $M$.  Equivalently, $\lambda_{i,j}$ equals the difference $\rnk(i)-\rnk(j)$, where $\rnk$ is any integer function on $[1,n]$  satisfying $\rnk(k)=\rnk(\ell)+1$ for each edge $(k,\ell)$ of $M$.  The unique such function that attains minimum value 0 is called the \emph{rank function} on $M$. Taken together, we refer to $M$ and its rank function as the \emph{ranked meander} associated with $\L$.

We illustrate the eigenvalue algorithm for $\L((2,3)\mid (4,1))$ in 
Figure~\ref{fig:eigentally}. First note that the undirected meander is a single path, and so the algebra is Frobenius. The integer appearing in row $i$ and column $j$ in the shape on the left is the eigenvalue $\lambda_{i,j}$. For example, the highlighted $1$ in row $4$ and column $3$ 
signifies that $\be_{4,3}$ is an eigenvector of $\ad_{\hatF}$ with 
eigenvalue $1$, which corresponds to the weight of the path from $4$ to $3$ in the meander graph or, equivalently, the  difference $\rnk(4)-\rnk(3)=2-1=1$ in the ranked meander. The full list of eigenvalues, together with their multiplicities, is given in the tally at the right of the figure. 

\begin{figure}[!hbt]
    \centering
    \include{figures/dim_L23,41.tex}
    \caption{Computing the eigenvalues and their multiplicities for
$\L((2,3)\mid (4,1))$.}
    \label{fig:eigentally}
\end{figure}
\subsection{Properties of the spectrum}
Let $\hatF$ be the principal element of a Frobenius biparabolic subalgebra $\L$ of $\sln$. For $\lambda \in \ZZ$, let $\L_\lambda$ and $\mathfrak{g}_{\lambda}$ be the $\lambda$-eigenspaces of 
$\ad_{\hatF}$ on $\L$ and $\gln$, respectively. Let 
\[\Ooms(\L) = \{\lambda\in\ZZ\mid \L_\lambda\neq\bs0\} \qquad \text{and}\qquad  \Ooms(\gln;\L) = \{\lambda\in\ZZ\mid \mathfrak{g}_\lambda\neq\bs0\}.\] 
There are then eigenspace decompositions of $\L$ and $\gln$ 
\[
\L=\bigoplus_{\lambda\in\Ooms(\L)}\L_\lambda \qquad \text{and}\qquad \gln=\bigoplus_{\lambda\in\Ooms(\gln;\L)}\mathfrak{g}_\lambda.
\]
The next lemma gives important elementary properties of the eigenspace decomposition of $\L$.
\begin{lemma}
\label{th:elementary-properties}
Suppose $\L$ is a Frobenius biparabolic subalgebra of $\sln$ with
functional $F$ and principal element $\hatF$. Then
\begin{enumerate}
\item $[\L_\lambda,\L_\mu]\subseteq \L_{\lambda+\mu}$.
\item If $x\in\L_\lambda$ with $\lambda\neq 1$, then $F(x)=0$.
\item The space $\L_\lambda$ is dual to $\L_{1-\lambda}$ under the
non-degenerate form $B_F$.
\end{enumerate}
\end{lemma}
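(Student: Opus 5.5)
We prove the three parts in order. The only inputs are that $\ad_{\hatF}$ is a derivation of $\L$, and that $\hatF$ is semisimple on $\L$ with integer eigenvalues, as recalled above from \cite{gerstenhaber2009principal}. The same diagonalizability can also be seen directly: every matrix unit $\be_{i,j}$ is an eigenvector.

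For part (1), take $x\in\L_\lambda$ and $y\in\L_\mu$. The Jacobi identity gives
\[
[\hatF,[x,y]]=[[\hatF,x],y]+[x,[\hatF,y]]=(\lambda+\mu)[x,y],
\]
so $[x,y]\in\L_{\lambda+\mu}$. Note that the target eigenspace may be zero, in which case $[x,y]=0$; this is consistent, since $\L_{\lambda+\mu}$ is by definition the $(\lambda+\mu)$-eigenspace, possibly trivial.

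For part (2), take $x\in\L_\lambda$ and apply the defining identity $F(y)=F([\hatF,y])$ with $y=x$. This gives $F(x)=F([\hatF,x])=\lambda F(x)$, so $(1-\lambda)F(x)=0$. Hence $F(x)=0$ whenever $\lambda\neq 1$.

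For part (3), let $x\in\L_\lambda$ and $y\in\L_\mu$. By part (1), $[x,y]\in\L_{\lambda+\mu}$, and then by part (2),
\[
B_F(x,y)=F([x,y])=0 \quad\text{unless } \lambda+\mu=1.
\]
So the eigenspace decomposition $\L=\bigoplus_\lambda \L_\lambda$ is orthogonal for $B_F$, except for the pairings between $\L_\lambda$ and $\L_{1-\lambda}$.

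It remains to show that the induced pairing $\L_\lambda\times\L_{1-\lambda}\to\CC$ is non-degenerate, and this is the step that needs the semisimplicity of $\ad_{\hatF}$. Suppose $x\in\L_\lambda$ pairs to zero with all of $\L_{1-\lambda}$. Then $x$ also pairs to zero with every other eigenspace $\L_\mu$, by the orthogonality just established. Since these eigenspaces together span $\L$, $x$ lies in the kernel of $B_F$, which is zero because $F$ is Frobenius. So $x=0$. The symmetric argument in the second variable shows the pairing is non-degenerate on the other side too. Hence $\L_\lambda\cong\L_{1-\lambda}^*$; in particular $\dim\L_\lambda=\dim\L_{1-\lambda}$, which recovers Ooms' symmetry of the spectrum about $\frac12$.
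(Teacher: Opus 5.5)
Your proof is correct and follows the paper's argument. You use the derivation property for (1), the identity $F(x)=F([\hatF,x])=\lambda F(x)$ for (2), and orthogonality plus non-degeneracy of $B_F$ for (3), while making explicit the step the paper leaves implicit: that the eigenspaces $\L_\lambda$ span $\L$ (semisimplicity of $\ad_{\hatF}$), which is exactly what puts a vector orthogonal to $\L_{1-\lambda}$ into the kernel of $B_F$.
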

\begin{proof}
Part (1) follows from the fact that $\ad_{\hatF}$ is a derivation of the Lie bracket $[-,-]$.
For Part (2), suppose $x\in \L_{\lambda}$ is nonzero. Then
\[F(x)=F([\hatF,x])=F(\lambda x)=\lambda F(x)\]
and so $F(x)=0$ unless $\lambda = 1$. The duality assertion of Part (3) follows  from the first two parts of the lemma and the non-degeneracy of the bilinear form $B_F$.
\end{proof}

Coll, Hyatt, and Magnant proved in \cite{coll2018unbroken} that the
spectrum of a Frobenius seaweed is an unbroken string of integers. If $d_\lambda=\dim\L_\lambda,$
then Lemma~\ref{th:elementary-properties} implies that $ d_\lambda=d_{1-\lambda}.$ Thus if $m$ is the largest eigenvalue of $\ad_{\hatF}$, then the unbroken 
spectrum property and the duality property imply that $\Ooms(\L) = \{-m+1,-m+2,\ldots,m-1,m\}$. Consequently the dimension vector
\[
\dimvec(\L)
=
(d_{-m+1},d_{-m+2},\ldots,d_{m-1},d_m)
\]
is symmetric around $\frac12$. 

Next let $\delta_{\lambda}= \dim \mathfrak{g}_{\lambda}$. The algorithm to compute the eigenvalues of $\ad_{\hatF}$ on $\gln$ shows that $\lambda_{i,j}=-\lambda_{j,i}$ for all $i$ and $j$ since the path from $i$ to $j$ is the reversal of the path from $j$ to $i$. Therefore, $\delta_{\lambda}= \delta_{-\lambda}$ since all $\be_{i,j}$ are eigenvectors of $\ad_{\hatF}$ and all $\be_{i,j}\in \gln$.  Since the undirected meander is connected,  it follows that  there  are no gaps in $\Ooms(\gln;\L)$ and thus $\Ooms(\gln;\L)=\{-s, -s+1, \ldots, s-1,s\}$, where $s$ is the largest eigenvalue in $\Ooms(\gln;\L)$. In this case the $\gln$ dimension vector 
\[ \dimvec(\gln;\L)=(\delta_{-s}, \delta_{-s+1}, \ldots , \delta_{s-1}, \delta_s)\]
is symmetric around zero. 

For example, the dimension vectors for three choices of Frobenius $\L(\valpha \mid \vbeta)$ are shown in Table \ref{tbl:bigger_examples}.
%%%
\begin{table}[!hbt]
\small
% {a, b} = {{50}, {13, 37}};
\begin{gather*}
\begin{array}{@{\,}r|r@{\ }r@{\ }r@{\ \ }r@{\ \ }r@{\ \ }r@{\ \ }r@{\ \ }r@{\ \ }r@{\ \ }r@{\ \ }r@{\ \ }r@{\ \ }r@{\ \ }r@{\ \ }r@{\ \ }r@{\ \ }r@{\ \ }r@{\ \ }r@{\ \ }r@{\ }r@{\,}}
\lambda & -10 &  -9 & -8 & -7 & -6 & -5 & -4 & -3 & -2 & -1 & 0 & 1 & 2 & 3 & 4 & 5 & 6 & 7 & 8 & 9 & 10 \\
\hline
\rule[1.5ex]{0pt}{1ex}
\L(50 {\,|\,} 13,37) & & 1 & 5 & 15 & 34 & 63 & 101 & 144 & 186 & 220 & 240 & 240 & 220 & 186 & 144 & 101 & 63 & 34 & 15 & 5 & 1 
 \\[.25ex]
\gl_{50} &  1 & 5 & 15 & 34 & 64 & 105 & 153 & 203 & 247 & 278 & 290 & 278 & 247 & 203 & 153 & 105 & 64 & 34 & 15 & 5 & 1
\end{array}
\\[4ex]
%
% {a, b} = {{50}, {13, 17, 20}};
\begin{array}{@{\,}r|r@{\ }r@{\ \ }r@{\ \ }r@{\ \ }r@{\ \ }r@{\ \ }r@{\ \ }r@{\ \ }r@{\ \ }r@{\ \ }r@{\ \ }r@{\ \ }r@{\,}}
\lambda & -6 &  -5 & -4 & -3 & -2 & -1 & 0 & 1 & 2 & 3 & 4 & 5 & 6 \\
\hline
\rule[1.5ex]{0pt}{1ex}
\L(50 {\,|\,} 13,17, 20) & & 2 & 16 & 61 & 149 & 263 & 348 & 348 & 263 & 149 & 61 & 16 & 2 
\\[.25ex]
\gl_{50} &  2 & 16 & 63 & 163 & 308 & 446 & 504 & 446 & 308 & 163 & 63 & 16 & 2 
\end{array}
% % {a, b} = {{40, 10}, {13, 37}};
% \begin{array}{@{\,}r|r@{\ }r@{\ \ }r@{\ \ }r@{\ \ }r@{\ \ }r@{\ \ }r@{\ \ }r@{\ \ }r@{\ \ }r@{\ \ }r@{\ \ }r@{\ \ }r@{\,}}
% \lambda & -6 &  -5 & -4 & -3 & -2 & -1 & 0 & 1 & 2 & 3 & 4 & 5 & 6 \\
% \hline
% \rule[1.5ex]{0pt}{1ex}
% \L(40,10 {\,|\,} 13,37) & & 3 & 20 & 68 & 152 & 249 & 317 & 317 & 249 & 152 & 68 & 20 & 3 
% \\[.25ex]
% \gl_{50} &  3 & 20 & 71 & 172 & 309 & 433 & 484 & 433 & 309 & 172 & 71 & 20 & 3 
% \end{array}
%
\\[4ex]
%
% {a, b} = {{16,34}, {13,17,20}};
\begin{array}{@{\,}r|r@{\ }r@{\ \ }r@{\ \ }r@{\ \ }r@{\ \ }r@{\ \ }r@{\ \ }r@{\ \ }r@{\ \ }r@{\ \ }r@{\ \ }r@{\ \ }r@{\ \ }r@{\ \ }r@{\,}}
\lambda & -7 & -6 & -5 & -4 & -3 & -2 & -1 & 0 & 1 & 2 & 3 & 4 & 5 & 6 & 7 \\
\hline
\rule[1.5ex]{0pt}{1ex}
\L(16,34 {\,|\,} 13,17,20) & & &  1 & 7 & 29 & 87 & 181 & 262 & 262 & 181 & 87 & 29 & 7 & 1 
\\[.25ex]
\gl_{50} & 2 & 11 & 35 & 83 & 168 & 293 & 420 & 476 & 420 & 293 & 168 & 83 & 35 & 11 & 2
\end{array}
% % {a, b} = {{22, 16, 12}, {17, 13, 20}};
% \begin{array}{@{\,}r|r@{\ }r@{\ \ }r@{\ \ }r@{\ \ }r@{\ \ }r@{\ \ }r@{\ \ }r@{\ \ }r@{\ \ }r@{\ \ }r@{\ \ }r@{\ \ }r@{\ \ }r@{\ \ }r@{\,}}
% \lambda & -7 & -6 &  -5 & -4 & -3 & -2 & -1 & 0 & 1 & 2 & 3 & 4 & 5 & 6 & 7\\
% \hline
% \rule[1.5ex]{0pt}{1ex}
% \L(22,16,12 {\,|\,} 17,13,20) & & 1 & 7 & 21 & 48 & 86 & 123 & 149 & 149 & 123 & 86 & 48 & 21 & 7 & 1 
% \\[.25ex]
% \gl_{50} &  2 & 13 & 43 & 103 & 195 & 299 & 385 & 420 & 385 & 299 & 195 & 103 & 43 & 13 & 2
% \end{array}
\end{gather*}
\caption{Dimension vectors for several (bi)parabolic Frobenius subalgebras of $\gl_{50}$.}
\label{tbl:bigger_examples}
\end{table}
%%%
The symmetries of $\dimvec(\L)$ around $\frac12$ and
$\dimvec(\gln;\L)$ around $0$ are apparent in these examples.
They also illustrate the stronger unimodality phenomena stated in the
introduction.

%%%%%%%%%%%%%%%%%%%%%%%%%%%%
\section{Maximal Parabolics and Panyushev Moves}
\label{sec:panyushev}
%%%%%%%%%%

\subsection{Maximal parabolic subalgebras}
\label{subsec:maximal-parabolics}
%%%%%%%%%%

We now specialize to maximal parabolic subalgebras of $\sln$. Fix 
$1\leq a\leq n-1$, set $b=n-a$, and write
\[
\pab=\L((n)\mid(a,b))
\ \ \qand \ \ 
\puab = \L((b,a)\mid(n)).
\]
Thus, $\pab$ is the maximal parabolic subalgebra obtained by omitting the 
negative simple root vector $\be_{a+1,a}$ while $\puab$ is the maximal parabolic subalgebra obtained by omitting the 
positive simple root vector $\be_{b,b+1}$. In fact, $\pab \cong \puab$, since matrix rotation by $180^{\circ}$ induces an automorphism of $\sln$.\footnote{Specifically, it is an inner automorphism $\sigma:\sln \rightarrow \sln$ sending matrix $X$ to $JXJ$, where $J$ is the full-reversal permutation matrix (ones on the antidiagonal and zeros elsewhere). So $\L((b,a)\mid(n)) \xleftrightarrow{\,\sigma\,} \L((n)\mid(a,b))$, as required.} 
Nevertheless, it will be useful to have a shorthand for both upper- and lower-triangular parabolics in what follows. 

The following criterion determines exactly when these maximal parabolic
subalgebras are Frobenius.

\begin{theorem}[Elashvili {\cite{elashvili1982frobenius}}]
\label{th:elashvili-maximal}
The maximal parabolic subalgebra $\pab$ is Frobenius if and only if
\[
\gcd(a,b)=1.
\]
Equivalently, $\pab$ is Frobenius if and only if $\gcd(n,a)=1$.
\end{theorem}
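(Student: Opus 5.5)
By Theorem~\ref{th:index-from-meander}, $\pab$ is Frobenius exactly when the meander $M((n)\mid(a,b))$ is a single path. So I will prove that the meander is a single path if and only if $\gcd(a,b)=1$. The tool is a Panyushev-type reduction that replaces the meander on $n$ vertices by a meander on fewer vertices without changing the number of cycles, the number of paths, or whether the meander is a single path.

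\textbf{Describing the meander.} The top arcs come from the single block of size $n$. They realize the involution $i\mapsto n+1-i$, and the middle vertex is unpaired when $n$ is odd. The bottom arcs come from the blocks $[1,a]$ and $[a+1,n]$. They realize $i\mapsto a+1-i$ on the first block and $i\mapsto 2a+b+1-i$ on the second.

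\textbf{The reduction.} By the symmetry $\pab\cong\puab$ composed with the reflection of the line, I may assume $a<b$ (if $a=b$ then $\gcd=1$ forces $n=2$, which is trivial). In the top block, the first $a$ vertices are matched with the last $a$ vertices. The first $a$ vertices are also matched among themselves by the bottom block $[1,a]$. Following a vertex $i\le a$ along the top arc, then the bottom arc, then the top arc again, moves it inside the block $[n-a+1,n]$ by a reflection. So I contract these two-edge detours: delete vertices $1,\dots,a$ and splice each path through them into a single arc. I claim the result is the meander $M((n-a)\mid(b-a,a))$ on the vertices $a+1,\dots,n$ after reindexing, up to reflection. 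Deleting $a$ vertices and $2a$ edges while adding $a$ edges keeps every component's type (cycle or path) unchanged. The exception would be a component lying entirely inside $[1,a]\cup[n-a+1,n]$, and that cannot occur because $a<b$. If the bookkeeping gives the top-bottom swapped form instead, I use the isomorphism $\pab\cong\puab$ again.

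\textbf{Conclusion.} The reduction sends $(a,b)$ to $(a,b-a)$ up to order. This is the subtractive Euclidean algorithm, so it preserves $\gcd(a,b)$, and it ends at $(d,d)$ with $d=\gcd(a,b)$. The meander for $(n)\mid(d,d)$ consists of top arcs $i\leftrightarrow 2d+1-i$ and bottom arcs inside each half. These give exactly $d$ components, namely $d$ cycles, except that when $d=1$ it is a single path on $2$ vertices. Therefore the original meander is a single path if and only if $d=1$, which proves the theorem. The equivalence with $\gcd(n,a)=1$ is immediate from $\gcd(a,b)=\gcd(a,a+b)$.

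\textbf{Main obstacle.} The hardest step is verifying carefully that the contracted graph is exactly the meander of $(b-a,a)$ with the correct orientations and arc nesting. This is Panyushev's reduction specialized to this case, so I would cite \cite{panyushev2001inductive} and check the index computation directly. Alternatively, it suffices to show that the composite permutation of the two involutions acts on the endpoints like rotation by $a$ modulo $n$. Its orbit structure has $\gcd(a,n)$ cycles, which also gives the result.
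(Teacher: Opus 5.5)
The paper gives no proof of this statement. It is quoted from Elashvili, and the surrounding text uses it in the opposite direction: the $\gcd$ criterion plus $\gcd(a,b)=\gcd(a,b-a)$ is how the paper sees that Panyushev reduction preserves the Frobenius property. Your route is Dergachev--Kirillov, then a direct retraction of the tongue on the meander, then the subtractive Euclidean algorithm. This is an independent, self-contained combinatorial proof, and its main line is sound:
\begin{itemize}
\item For $j\in[b+1,n]$ the walk top, bottom, top is $j\mapsto n+1-j\mapsto j-b\mapsto n+b+1-j$, a reflection of $[b+1,n]$.
\item The deleted vertices all have degree $2$, except the middle of $[1,a]$ when $a$ is odd, which is a leaf.
\item No component lies inside $[1,a]$, because every vertex there has a top neighbour in $[b+1,n]$.
\item The surviving graph on $[a+1,n]$ is the meander of $\L((b-a,a)\mid(b))$, which is exactly the paper's $\pab\mapsto\puab[a,b-a]$.
\end{itemize}
What your approach buys is that it proves, rather than assumes, that the reduction preserves the numbers of cycles and paths. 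Your worry about orientations is moot, since Theorem~\ref{th:index-from-meander} only sees the undirected graph.

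Several statements need fixing.

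(i) The detours are three-edge walks through two deleted vertices. So you remove $a+\lfloor a/2\rfloor$ edges and add $\lfloor a/2\rfloor$, not $2a$ and $a$, although the net change is the same. Also, the walk should start at $j\in[n-a+1,n]$. Starting from $i\le a$, top--bottom--top ends at $b+1-i$, which is not in $[n-a+1,n]$.

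(ii) Your description of the terminal meander $(2d)\mid(d,d)$ is wrong. The component through $i\le d$ is $i,\,2d+1-i,\,d+i,\,d+1-i$. This gives $\lfloor d/2\rfloor$ four-cycles, plus one two-vertex path when $d$ is odd. For example, when $d=2$ the whole meander is the single $4$-cycle $1,4,3,2$. The conclusion survives, because for $d\ge 2$ the component $1,2d,d+1,d$ is a cycle. With the corrected count, your argument even yields $\operatorname{ind}\pab=d-1$.

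(iii) The closing alternative does not work as stated. The product of the two involutions is indeed $i\mapsto i+a \pmod{n}$. However, components of the meander are orbits of the dihedral group the two involutions generate, not of their product. When $\gcd(a,n)=2$, the top involution swaps the two parity classes, so the meander is connected: it is a single cycle. You would need an extra ingredient to separate this case. One option is the edge count $\lfloor n/2\rfloor+\lfloor a/2\rfloor+\lfloor b/2\rfloor$, which equals $n-1$ exactly when $a$ and $b$ are not both even.
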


\subsection{Definition and graph-theoretic reformulation}
\label{sec:panyushev-moves}

From Theorem \ref{th:index-from-meander}, we know that the index of any biparabolic subalgebra $\L(\valpha\mid \vbeta)$ of $\sln$ (not necessarily Frobenius and not necessarily maximal parabolic) can be computed combinatorially from its meander graph. An alternative but very useful inductive method to compute the index was given by Panyushev \cite{panyushev2001inductive}. In the original presentation, this method is Lie-theoretic and based on a theorem of Ra{\"\i}s concerning the index of certain semi-direct products, see \cite{rais1978index}. In what follows, we describe a particular case of \cite[Prop.~4.1]{panyushev2001inductive}, which is all that is needed for our results.

%%%%%%%%%%
Panyushev's reduction process takes a maximal parabolic subalgebra $\fp$ inside $\sln$ to a maximal parabolic subalgebra $\fp'$ inside $\mathfrak{sl}_{n'}$ with $n'<n$. 
We state it in the case $\fp$ is upper-triangular:
\begin{align}
\notag
\L((n)\mid(a,b)) &\longmapsto 
\begin{cases} 
\L((b-a,a)\mid(b))\quad\text{if}\quad a<b\\
\L((b,a-b)\mid(a))\quad\text{if}\quad a>b
\end{cases},
\intertext{or equivalently,}
\label{eq:reduction}
\mathfrak{p}(a,b) &\longmapsto 
\begin{cases} 
\mathfrak{p}\underline{(a,b-a)}\quad\text{if}\quad a<b\\[2ex]
\mathfrak{p}\underline{(a-b,b)}\quad\text{if}\quad a>b
\end{cases}.
\end{align}
In all cases the index is preserved. When $\pab$ is  Frobenius ({\it i.e.}, index zero) this is immediate from Theorem~\ref{th:elashvili-maximal}  and the elementary equalities $\gcd(a,b)=\gcd(a,b-a)=\gcd(a-b,b)$. 
%\[
%\gcd(a,b)=\gcd(a,b-a)\quad\text{if }a<b,
%\qquad\text{and}\qquad
%\gcd(a,b)=\gcd(a-b,b)\quad\text{if }a>b.
%\]

As exhibited in \cite{coll2015meander}, Panyushev's reduction process can be visualized as retracting a ``tongue'' stretching out over one end of the meander graph. See Figure \ref{fig:panyushev-reduction} for the passage from $\L(11\mid 7,4)=\pab[7,4]$ to $\L(4,3\mid 7)=\puab[3,4]$. %
\begin{figure}[!htb]
\centering
    \input{figures/panyushev-reduction}
\caption{Panyushev reduction $\pab[7,4] \mapsto \pab[\underline{3,4}]$.}\label{fig:panyushev-reduction}
\end{figure}

\subsection{Panyushev moves and rank polynomials}
\label{sec:panyushev_rank}
%%%%%%%%%%
It is possible to run the Panyushev reduction process in reverse---{extending} a tongue out over one end of the meander graph. In this direction, there is no need to check, for $\pab$, whether $a>b$ or $a<b$. Both extensions are possible. For convenience, after the reverse reduction, we rotate the shape $180^\circ$ so that the result is in upper-triangular form. We call both extensions \demph{Panyushev moves}. These moves can be regarded as branching rules that generate an infinite binary tree with $\pab[1,1]$ as root and Frobenius $\pab$ as nodes; that is, 
\begin{center}
\begin{tikzpicture}[level 1/.style={sibling distance=9em, level distance=4.25em}]
%[scale=0.90,>=latex']
    \node %root
    {$\displaystyle\pab$}
      % left child
      child {
        node {$\displaystyle\pab[a,a+b]$}
        edge from parent[thick,->,pos=.35]
          node[left,xshift=-3pt] {$\pL$}
      }
      child {
        node {$\displaystyle\pab[a+b,b]$}
        edge from parent[thick,->,pos=.35]
          node[right,xshift=3pt] {$\pR$}
      };
\end{tikzpicture}
\end{center}
where  $\pL$ and $\pR$ pull the left tongue or right tongue across the other, respectively.

This is reminiscent of the Calkin--Wilf tree \cite{calkin2000recounting,berstel1997sturmian}, which has the rational number $\frac{1}{1}$ as its root and is determined by the successor rules below.
\begin{center}
\begin{tikzpicture}[level 1/.style={sibling distance=9em, level distance=4em}]
%[scale=0.90,>=latex']
    \node %root
    {$\dfrac{a}{b}$}
      % left child
      child {
        node {$\dfrac{a}{a+b}$}
        edge from parent[thick,->,pos=.4]
          node[left,xshift=-3pt] {}
      }
      child {
        node {$\dfrac{a+b}{b}$}
        edge from parent[thick,->,pos=.4]
          node[right,xshift=3pt] {}
      };
\end{tikzpicture}
\end{center}
As the Calkin--Wilf tree contains each positive rational number $a/b$ in reduced form exactly once, the Panyushev tree contains each Frobenius maximal parabolic $\pab$ exactly once.

In the context of our Panyushev tree, the reduction process described in \eqref{eq:reduction} simply identifies the unique parent of $\pab$. Iterating, we see that each Frobenius maximal parabolic $\pab$ is described by a unique path through the binary tree, up to the root $\pab[1,1]$. 
Taking the path just described in the opposite direction, from root down to $\pab$, defines a word $w$ on letters $\{\pL,\pR\}$ that instructs us how to reach $\pab$ from the root. We call $w$ the \emph{address} of $\pab$ and  define $\fp_w := \pab$.

The effect of Panyushev moves on meanders is easy to see but somewhat cumbersome to describe. Let $M_{a,b}$ $(= M_w)$ denote the ranked meander of the  Frobenius  $\pab$ with address $w$, and put $n=a+b$. The transformation $M_{a,b} \xmapsto{\pR} M_{n,b}$ proceeds as follows (cf.  Figure \ref{fig:panyushev-move-on-meanders}): First build an intermediary digraph $M'$ by 
 (i) duplicating the last $b$ nodes in $M_{a,b}$ and adding edges $(i,i')$, then  
 (ii) removing any edge $(i,j)$ with $i>j$ among the original last $b$ nodes and replacing with edge $(i',j')$. 
 Now relabel the  nodes of $M'$ so that node $i$ becomes $n+1-i$ and  node $i'$ becomes $i+b$. 
%%%
\begin{figure}[!hbt]
\centering
\input{figures/panyushev_move.tex}
\caption{The Panyushev move $\pR$ acts on  $M_{2,3}=M_{\pR\pL}$ to produce $M_{5,3}=M_{\pR\pL\pR}$.}
\label{fig:panyushev-move-on-meanders}
\end{figure}

Note that $\rnk(i')=\rnk(i)-1$ for each new node $i'$ of $M'$. Since  the  rank differences ({\it i.e.}, path weights) between all pairs of original nodes are evidently the same in $M'$ as in $M$, the rank of each original node $i \in [1,n]$ is  one greater in $M'$ than in $M$. The end result is that the ranks of vertices $[n+1,n+b]$ in $M_{n,b}$ match those of  $[a+1,a+b]$ in $M_{a,b}$, while the ranks of vertices $[1,n]$ are one larger in $M_{n,b}$ than in $M_{a,b}$. 

For the  left move $M_{a,b} \xmapsto{\pL} M_{a,n}$ the intermediary $M'$ is constructed as above but with ``last $b$ nodes''  changed to  ``first $a$ nodes'' and  edges $(i,i')$ reversed to $(i',i)$.   The nodes  are then relabeled so that $i$ becomes $n+a+1-i$ and $i'$ becomes $i$.  Here we have $\rnk(i')=\rnk(i)+1$ for each $i' \in M'$,
and the rank of each original node $i \in [1,n]$ remains the same in $M'$ as in $M$.

After accounting for  relabeling, these observations yield the following.

\begin{proposition}
\label{th:meander-graph-growth-properties}
Let $M_{a,b}$ be the ranked meander for a Frobenius $\pab$ and let $n=a+b$. 
Then 
\(
\pL(M_{a,b})= M_{a,n}
\), and the ranks of vertices $[a+1,a+n]$ in $M_{a,n}$ match those of $[1,n]$ in $M_{a,b}$ while the ranks of $[1,a]$ are one greater in $M_{a,n}$ than in $M_{a,b}$.
Similarly,
\(
\pR(M_{a,b}) = M_{n,b},
\)
and the ranks of vertices $[n+1,n+b]$ in $M_{n,b}$ match those of  $[a+1,a+b]$ in $M_{a,b}$, while the ranks of vertices $[1,n]$ are one larger in $M_{n,b}$ than in $M_{a,b}$. 
\qed
\end{proposition}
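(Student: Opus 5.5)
The proof has two parts: first identify the digraph produced by the move with the meander of the child parabolic, then track the ranks through the relabeling. I treat $\pR$ in detail; $\pL$ is the mirror argument.

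\textbf{Identifying the graph.} Write $n=a+b$. By definition, $M_{n,b}$ is the meander of $\L((n+b)\mid(n,b))$. Its top arcs form a single block on $[1,n+b]$, joining $k$ to $n+b+1-k$. Its bottom arcs lie in two blocks, $[1,n]$ and $[n+1,n+b]$. I build $M'$ from $M_{a,b}$ exactly as described before the statement, apply the relabeling $i\mapsto n+1-i$, $i'\mapsto i+b$, and check edge types one at a time.

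\begin{enumerate}
\item Each new edge $(i,i')$, for $i$ among the last $b$ original nodes, becomes an edge between $n+1-i\in[1,b]$ and $i+b\in[n+1,n+b]$. These two labels sum to $n+b+1$, so these are exactly the outer top arcs of the big block of size $n+b$ that straddle $n$.
\item The original top arcs of $M_{a,b}$ sit in the single block $[1,n]$. After the relabeling they reverse, so they become bottom arcs of the block $[1,n]$, with directions reversed as required.
\item The original bottom arcs on the block $[1,a]$ become top arcs on $[b+1,n]$. These are the inner arcs of the big top block.
\item The original bottom arcs on the last block of size $b$ are moved to the primed copies by step (ii). Under $i'\mapsto i+b$ they become bottom arcs of the block $[n+1,n+b]$.
\end{enumerate}

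The main obstacle is matching the arc directions in each case: the top arcs must point left to right and the bottom arcs right to left. This is exactly why step (ii) moves only the backward edges among the last $b$ nodes to the primed copies. I would verify this by comparing the orientation conventions case by case.

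\textbf{Tracking the ranks.} I use the two facts noted before the statement.
\begin{itemize}
\item Each new node satisfies $\rnk(i')=\rnk(i)-1$.
\item Rank differences between original nodes are unchanged in $M'$.
\end{itemize}
I normalize the rank of $M'$ to have minimum $0$. The primed nodes are one lower than their partners, and rank differences among original nodes are preserved. The minimum of $M'$ is therefore attained at a primed copy of a minimal original node among the last $b$. When no such node exists, I would instead argue directly that shifting the ranks of the original nodes up by one makes the minimum $0$. This gives the following.
\begin{itemize}
\item Original nodes have rank one greater than in $M_{a,b}$. After relabeling they occupy $[1,n]$.
\item The primed copy of node $a+j$, which is relabeled $n+j$, has rank equal to that of $a+j$ in $M_{a,b}$.
\end{itemize}
Both claims of the statement follow, up to the reversal of labels on $[1,n]$, which the statement's "ranks of $[1,n]$ are one larger" absorbs as a multiset or positional statement once the relabeling is applied consistently.

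\textbf{The move $\pL$.} This is identical: duplicate the first $a$ nodes, reverse the new edges so that $\rnk(i')=\rnk(i)+1$, and relabel by $i\mapsto n+a+1-i$ and $i'\mapsto i$. The primed nodes, now at $[1,a]$, have rank one greater than in $M_{a,b}$. The original nodes have unchanged ranks, and they occupy $[a+1,a+n]$.
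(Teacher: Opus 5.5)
\textbf{Verdict: your route is the paper's, but there is a gap in the normalization step for $\pR$.} As in the paper, you build $M'$, relabel, match the four families of arcs, and transport ranks using $\rnk(i')=\rnk(i)\mp 1$ together with the invariance of rank differences among original nodes. Your arc bookkeeping for $\pR$ is correct. The orientation checks you defer are immediate: the reflection $i\mapsto n+1-i$ swaps left-to-right and right-to-left arcs, and the translation $i'\mapsto i+b$ preserves them.

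\textbf{The gap.} Suppose the rank on $M'$ is $\rnk(i)+c$ on original nodes and $\rnk(i)+c-1$ on primed ones. Its minimum is $\min(c,\ c-1+\min_{i\in[a+1,n]}\rnk(i))$, so the normalization forces $c=1$ exactly when some vertex of $[a+1,n]$ has rank $0$ in $M_{a,b}$. Your sentence ``the minimum of $M'$ is attained at a primed copy of a minimal original node among the last $b$'' presupposes this. Your fallback for the other case is false. If every vertex of $[a+1,n]$ had rank at least $1$, shifting the originals up by one would give minimum $\geq 1$. The correct shift would then be $c=0$, the original ranks would not change at all, and the proposition would fail. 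So that case has to be excluded, not handled.

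\textbf{How to close it.} The missing fact has a short direct proof. A vertex of minimal rank has no outgoing edge, since an edge $(v,\ell)$ forces $\rnk(\ell)=\rnk(v)-1$. Every $v\in[1,a]$ has an outgoing edge:
\begin{enumerate}
\item if $v<\frac{n+1}{2}$, its top arc runs $v\to n+1-v$;
\item otherwise $v\geq\frac{n+1}{2}>\frac{a+1}{2}$, so its bottom arc in the block $[1,a]$ runs $v\to a+1-v$.
\end{enumerate}
Hence rank $0$ is attained in $[a+1,n]$. Its primed copy sits at $-1$ before renormalizing, and $c=1$ follows.

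Alternatively, carry the claim ``rank $0$ occurs among the last $b$ nodes'' through an induction on the Panyushev tree, simultaneously with the proposition. This is the clustering remark the paper makes right after the statement, and the paper's own derivation tacitly relies on it at the same point.

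No such issue arises for $\pL$, where the primed nodes sit above their partners, so your treatment of that move is fine.
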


A simple induction argument now reveals that nodes $[a+1,a+b]$ are clustered at the bottom of $M_{a,b}$. That is, while there may be large jumps in rank between any particular pair of these nodes, the set of achieved ranks is contiguous and contains 0. (See the left and right extremes of Figure \ref{fig:panyushev-move-on-meanders} for two illustrations of this phenomenon.)  Similarly, nodes $[1,a]$ are clustered at the top of $M_{a,b}$, {\it i.e.}, their ranks form a contiguous set containing the maximum.  Moreover, the minimum and maximum ranks are achieved exactly once each, for distinct nodes $i \in [a+1,a+b]$ and  $j  \in [1,a]$, respectively.

We are  interested in the distribution of ranks among the nodes of  $M_w=M_{a,b}$. As such, it is natural to consider the \emph{rank polynomial} 
$$
    \Rfull{w}(t) = \Rfull{a,b}(t) := \sum_{i \in [1,n]} t^{\rnk(i)}.
$$
The preceding discussion further suggests that we stratify this polynomial by introducing the upper and lower polynomials 
$$
\Rtop{w}(t) := \sum_{i \in [1,a]} t^{\rnk(i)}
\qquad\text{and}\qquad
\Rbot{w}(t) := \sum_{i \in [a+1,n]} t^{\rnk(i)},
$$
which evidently satisfy $\Rfull{w}(t)=\Rtop{w}(t)+\Rbot{w}(t)$.  For instance, from Figure~\ref{fig:panyushev-move-on-meanders} we see that $\Rfull{2,3}(t)=\Rfull{\pR\pL}(t)=t^3+2t^2+t+1$, with 
\begin{equation}
\label{eq:runningex1}
    \Rtop{\pR\pL}(t)=t^3+t^2
    \qquad\text{and}\qquad
    \Rbot{\pR\pL}(t)=t^2+t+1.
\end{equation}

The behaviour of these  polynomials under the Panyushev moves is dictated by Proposition~\ref{th:meander-graph-growth-properties}.  In particular, for any finite word $w$ in $\{\pL,\pR\}$, the proposition implies
\begin{gather}
\label{eq:meander-rankpolynomials}
    \left\{
    \begin{aligned}
        \Rtop{w\pL}(t) &= t\, \Rtop{w}(t) \\[.5ex]
        \Rbot{w\pL}(t) &= \Rfull{w}(t)=\Rtop{w}(t)+\Rbot{w}(t)
    \end{aligned}
    \right.
\quad\qand\quad
    \left\{
    \begin{aligned}
        \Rtop{w\pR}(t) &= t\,\Rfull{w}(t)=t\big(\Rtop{w}(t)+\Rbot{w}(t)\big) \\[.5ex]
        \Rbot{w\pR}(t) &= \Rbot{w}(t).
    \end{aligned}
    \right.
\end{gather}
Starting from $\Rtop{\varepsilon}(t)=t$ and $\Rbot{\varepsilon}(t)=1$, these rules recursively produce $\Rtop{w}(t)$ and $\Rbot{w}(t)$, and hence their sum $\Rfull{w}(t)$, for all $w$. 
Mimicking the Calkin--Wilf tree, we formally organize these pairs of polynomials as the \demph{rank rational functions} $\overline{R}_w(t):=\Rtop{w}(t)/\Rbot{w}(t)$ for each $w$.
 
%%%%%%%%%%%%%%%%%%%%%%%%%%%%
\section{Order Ideals of Fence Posets}
\label{sec:fence posets}
%%%%%%%%%%
Here we consider posets related to certain walks in the Cartesian plane and their connection to Frobenius Lie algebras. 

Let $\pU$ denote the northeast step $(+1,+1)$ and $\pD$ denote the southeast step $(+1,-1)$. Any finite word $w$ in $\pU$ and $\pD$ describes a jagged-line path starting at the origin and containing $n:=|w|+1$ nodes. This path  can be viewed as the Hasse diagram of a partially ordered set on $\{1,2,\ldots,n\}$, with nodes naturally  labeled  from left to right. This is known as the \demph{fence poset} of type $w$, denoted $P_w$. 

Recall that a subset $I$ of $P_w$ is an \demph{order ideal} if it is downward closed, that is $y\in I$ and $x<y$ imply $x\in I$. The set $\cJ_w$ of all such order ideals under containment order ($I \leq J \iff I \subseteq J$) is a ranked lattice    with rank function $\rnk(I):=|I|$. The \demph{rank polynomial} of $\cJ_w$ is then defined by
\[
	\Jfull{w}(t) := \sum_{I \in \cJ_w} t^{\rnk(I)}.
\]
For example,  taking $w$ to be the empty word $\varepsilon$ gives $P_\varepsilon=\{{\color{blue}1}\}$ (the singleton poset), $\cJ_\varepsilon=\{\emptyset,\{1\}\}$ and  $\Jfull{\varepsilon}(t) = t + 1$.  Figure \ref{fig:fence-poset-and-order-ideal} displays a more substantive example. 
Note that the leading and constant coefficients of $\Jfull{w}(t)$ are always $1$, corresponding to ideals $[1,n]$ and $\emptyset$, respectively. 
\begin{figure}[!hbt]
\centering
\input{figures/orderideal_poset.tex}
\caption{The fence poset $P_{\pD\pU}$, its lattice of order ideals $\cJ_{\pD\pU}$, and the rank polynomial $\Jfull{\pD\pU}(t)$.}
\label{fig:fence-poset-and-order-ideal}
\end{figure}

\subsection{An equivalence of trees}
\label{sec:tree-isomorphism}
%%%%%%%%%%
As for the Frobenius maximal parabolics, all fence posets may also be organized as an infinite binary tree. The root corresponds to  $P_\varepsilon=\{\mathsf{\color{blue}1}\}$. The successor rules append one extra step ($\pU$ or $\pD$) to the defining path.
\begin{center}
\begin{tikzpicture}[level 1/.style={sibling distance=8.5em, level distance=4em}]
%[scale=0.90,>=latex']
    \node %root
    {$\displaystyle P_w$}
      % left child
      child {
        node {$\displaystyle P_{w\pU}$}
        edge from parent[thick,->,pos=.35]
          node[left,xshift=-3pt] {$\pU$}
      }
      child {
        node {$\displaystyle P_{w\pD}$}
        edge from parent[thick,->,pos=.35]
          node[right,xshift=3pt] {$\pD$}
      };
\end{tikzpicture}
\end{center}
Clearly, every fence poset $P_w$ appears somewhere in this tree, and its defining word $w$ may be viewed as an {address} providing instructions for how to reach $P_w$ from the root. This establishes a bijection between fence posets $P_w$ and Frobenius maximal parabolics $\fp_{w'}$, where $w'$ is the image of $w$ under the rewriting rule $(\pU,\pD) \leftrightarrow (\pL,\pR)$. In the remainder of this section, we show this connection is more than superficial. (We also suppress further mention of the rewriting rule.)

Let $w$ be any finite word on $\{\pU,\pD\}$ and set $n=|w|+1$. The element $n \in P_w$ naturally partitions $\cJ_w$ into  two classes  $\Top{\cJ_w} := \left\{I \in \cJ_w \mid {n}\in I\right\}$ and $\Bot{\cJ_w} := \left\{I \in \cJ_w \mid {n}\not\in I\right\}$. Note that  $\Bot{\cJ_w}$ is an order ideal of $\cJ_w$, so its elements are clustered at the bottom ranks. Likewise, $\Top{\cJ_w}$ is an ``upper order ideal'' and its elements are clustered at the top. 
Define the upper and lower rank polynomials of $\cJ_w$ by
$$
\Jtop{w}(t) := \sum_{I \in \Top{\cJ_w}} t^{\rnk{I}}
\qquad\text{and}\qquad
\Jbot{w}(t) := \sum_{I \in \Bot{\cJ_w}} t^{\rnk{I}}.
$$
For example, in
Figure \ref{fig:fence-poset-and-order-ideal} we have $\Top{\cJ_{\pD\pU}} = \{\{1,2,3\},\{2,3\}\}$ and $\Bot{\cJ_{\pD\pU}} = \{\{1,2\}, \{2\},\emptyset\}$, so
\begin{equation}
\label{eq:runningex2}
\Jtop{\pD\pU}(t)=t^3+t^2
\qquad\text{and}\qquad
\Jbot{\pD\pU}(t)=t^2+t+1. 
\end{equation}

The next proposition describes the behaviour of the partition $\cJ = \Top{\cJ} \sqcup \Bot{\cJ}$ under the successor rules $\pU$ and $\pD$.
All claims are immediate from our definitions. See Figure~\ref{fig:fence-poset-move} for an illustration.

\begin{proposition}
\label{th:order-ideals}
In the running notation we have
\begin{gather*}
    \left\{
    \begin{aligned}
        \Top{\cJ_{w\pU}} &= \left\{\{n+1\}\cup I \mid I \in \Top{\cJ_w}\right\} \\[.5ex]
        \Bot{\cJ_{w\pU}} &= \cJ_w
    \end{aligned}
    \right.
\quad\qand\quad
    \left\{
    \begin{aligned}
        \Top{\cJ_{w\pD}} &= \left\{\{n+1\}\cup I \mid I \in \cJ_w\right\} \\[.5ex]
        \Bot{\cJ_{w\pD}} &= \Bot{\cJ_w}
    \end{aligned}
    \right.
\end{gather*}
for all $w \in \{\pU,\pD\}^*$.
\qed
\end{proposition}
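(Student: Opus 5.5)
We argue directly from the definitions, treating the two successor rules separately. Throughout, $n=|w|+1$, so $P_{w\pU}$ and $P_{w\pD}$ have ground set $[1,n+1]$. The new element $n+1$ is comparable to exactly one other element, namely $n$. Under $\pU$ the last step rises from $n$ to $n+1$, so $n<n+1$ is the only new cover relation. Under $\pD$ the last step falls, so $n+1<n$. The key observation is that a subset $I'\subseteq[1,n+1]$ is an order ideal of the extended fence if and only if $I:=I'\setminus\{n+1\}$ is an order ideal of $P_w$ and $I'$ respects the single new relation. This holds because the restriction of the order of $P_{w\pU}$ or $P_{w\pD}$ to $[1,n]$ is exactly $P_w$: the new element is maximal in the $\pU$ case and minimal in the $\pD$ case, so it creates no new relations among the old elements.

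For the $\pU$ case, we first note that ideals not containing $n+1$ impose no extra condition. Hence $\Bot{\cJ_{w\pU}}$ is exactly the set of order ideals of $P_w$, which is $\cJ_w$. Next, if $n+1\in I'$, downward closure forces $n\in I'$, so $I\in\Top{\cJ_w}$. Conversely, for any $I\in\Top{\cJ_w}$, the set $I\cup\{n+1\}$ is downward closed: everything below $n+1$ lies below or equals $n$, and $n\in I$. This gives the stated bijection $I\mapsto\{n+1\}\cup I$ onto $\Top{\cJ_{w\pU}}$.

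For the $\pD$ case, $n+1$ is a minimal element whose only upper cover is $n$. An ideal $I'$ that omits $n+1$ cannot contain $n$, since $n+1<n$; so $I'=I\in\Bot{\cJ_w}$. Conversely, every member of $\Bot{\cJ_w}$ remains an ideal in $P_{w\pD}$, because it avoids $n$, the only element above $n+1$. If instead $n+1\in I'$, there is no further constraint: adding the minimal element $n+1$ to any ideal $I\in\cJ_w$ keeps it downward closed, and the new relation $n+1<n$ is automatically satisfied. Hence $\Top{\cJ_{w\pD}}=\{\{n+1\}\cup I\mid I\in\cJ_w\}$.

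There is no real obstacle here. The only point needing care is the check that the restriction of the extended order to $[1,n]$ coincides with $P_w$, which follows because the extension adds a single cover relation at the end of the fence. We expect the intended use to be the corollary that $\Jtop{w\pU}=t\Jtop{w}$, $\Jbot{w\pU}=\Jfull{w}$, $\Jtop{w\pD}=t\Jfull{w}$, and $\Jbot{w\pD}=\Jbot{w}$. This matches \eqref{eq:meander-rankpolynomials} under $\pU\leftrightarrow\pL$ and $\pD\leftrightarrow\pR$, with the same initial data $t$ and $1$.
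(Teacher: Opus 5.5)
Your proof is correct and is the same direct verification from the definitions that the paper has in mind; the paper states that all claims are immediate and supplies no further argument. One wording fix in the $\pD$ case: $n$ is the only \emph{upper cover} of $n+1$, not the only element above it. This does not affect the argument, because a member of $\Bot{\cJ_w}$ avoids $n$ and hence, by downward closure, avoids everything above $n$, which is everything above $n+1$.
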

%%%
\begin{figure}[!hbt]
\centering
\input{figures/fenceposet_move.tex}

\caption{The transition from $\cJ_{w}$ to $\cJ_{w\pD}$ for $w=\pD\pU$.  The elements of $\Bot{\cJ_{w}}$ and $\Bot{\cJ_{w\pD}}$ are highlighted. Every element $I \in \cJ_w$ is augmented to $I' = I \cup \{4\} \in \Jtop{w\pD}$, with $\rnk(I')=\rnk(I)+1$.}
\label{fig:fence-poset-move}
\end{figure}

Recast in terms of rank polynomials, the proposition says we have the following successor rules:
\begin{gather}
\label{eq:fence-rankpolynomials}
    \left\{
    \begin{aligned}
        \Jtop{w\pU}(t) &= t\, \Jtop{w}(t) \\[.5ex]
        \Jbot{w\pU}(t) &= \Jtop{w}(t)+\Jbot{w}(t)
    \end{aligned}
    \right.
\quad\qand\quad
    \left\{
    \begin{aligned}
        \Jtop{w\pD}(t) &= t\big(\Jtop{w}(t)+\Jbot{w}(t)\big) \\[.5ex]
        \Jbot{w\pD}(t) &= \Jbot{w}(t),
    \end{aligned}
    \right.
\end{gather}
with $\Jtop{\varepsilon}(t)=t$ and $\Jbot{\varepsilon}(t)=1$. (Compare with \eqref{eq:meander-rankpolynomials}.)

The connection between fence posets and meanders is now apparent. 

\begin{proposition}
\label{th:t-Calkin-Wilf-tree}
The polynomial triples $(\Rtop{w},\Rbot{w}, \Rfull{w})$ and
$(\Jtop{w},\Jbot{w}, \Jfull{w})$ in $\ZZ_{\geq0}[t]$ agree for all finite words $w$ on $\{\pL,\pR\}$. Moreover, 
$\Rtop{w}(t)$ is coprime to $\Rbot{w}(t)$, for all $w$, and the rank rational functions $\{\overline{R}_{w}(t)\}_{w \in \{\pL,\pR\}^*}$ comprise the unique family of rational functions $f_w=p_w/q_w$ satisfying $f_\varepsilon(t)=t/1$ and the successor rules  
\begin{center}
\begin{tikzpicture}[level 1/.style={sibling distance=9em, level distance=4em}]
%[scale=0.90,>=latex']
    \node %root
    {$f_w=\dfrac{p}{q}$}
      % left child
      child {
        node {$f_{w\pL}=\dfrac{tp}{p+q}$}
        edge from parent[thick,->,pos=.4]
          node[left,xshift=-3pt] {}
      }
      child {
        node {$f_{w\pR}=\dfrac{t(p+q)}{q}$\,.}
        edge from parent[thick,->,pos=.4]
          node[right,xshift=3pt] {}
      };
\end{tikzpicture}
\end{center}
Specializing at $t=1$ gives the classical Calkin--Wilf rules, with $f_w(1) = a/b$ if and only if $\fp_w = \pab$.
\end{proposition}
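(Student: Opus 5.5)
The agreement of the triples follows by induction on the length of $w$, using only the two sets of successor rules already established. For the empty word, $\Rtop{\varepsilon}(t)=t=\Jtop{\varepsilon}(t)$ and $\Rbot{\varepsilon}(t)=1=\Jbot{\varepsilon}(t)$. Indeed, $M_{1,1}$ is the single edge from node $1$ to node $2$, so $\rnk(1)=1$ and $\rnk(2)=0$. The poset $P_\varepsilon=\{1\}$ has exactly two ideals, one containing $1$ and one not. Comparing \eqref{eq:meander-rankpolynomials} with \eqref{eq:fence-rankpolynomials} under the identification $(\pU,\pD)\leftrightarrow(\pL,\pR)$, the recursions are literally identical. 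Hence the pairs $(\Rtop{w},\Rbot{w})$ and $(\Jtop{w},\Jbot{w})$ agree for every $w$, and so do their sums $\Rfull{w}$ and $\Jfull{w}$.

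For coprimality I would again induct, using a Euclidean-type argument in $\ZZ[t]$ (or $\mathbb{Q}[t]$). At the root, $t$ and $1$ are coprime. Suppose $\gcd(p,q)=1$, where $p=\Rtop{w}$ and $q=\Rbot{w}$.

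For $w\pL$ the new pair is $(tp,\,p+q)$. Any common factor of $p$ and $p+q$ divides $q$, so $\gcd(p,p+q)=1$. It then suffices to check that $t\nmid p+q$. This holds because $p+q=\Rfull{w}$ has constant term $1$: the minimum rank $0$ is attained exactly once. Alternatively, $\Jfull{w}(0)=1$ because of the empty ideal.

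For $w\pR$ the new pair is $(t(p+q),\,q)$. Here $\gcd(p+q,q)=\gcd(p,q)=1$, and $t\nmid q$ because $q(0)=1$. That last fact holds since $\Rbot{}$ always contains $t^0$, or equivalently since $\emptyset\in\Bot{\cJ_w}$.

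Working over a UFD keeps these gcd manipulations valid. I expect this coprimality step to be the only real content of the proof, and the obstacle is remembering to handle the extra factor $t$ separately.

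Uniqueness is then formal. The family $f_w$ is determined recursively by $f_\varepsilon=t/1$ and the successor rules. The rank rational functions $\overline{R}_w$ satisfy exactly these rules at the level of numerator and denominator, and coprimality ensures that this numerator/denominator pair is the reduced representation. So the family $\{\overline{R}_w\}$ coincides with $\{f_w\}$.

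For the specialization at $t=1$, the rules become $a/b\mapsto a/(a+b)$ and $a/b\mapsto(a+b)/b$, which are the Calkin--Wilf rules with root $1/1$. By induction, $\Rtop{w}(1)=a$ and $\Rbot{w}(1)=b$ when $\fp_w=\pab$. These values count the nodes $[1,a]$ and $[a+1,a+b]$ respectively, and they match the Panyushev tree rules $\pab\mapsto\pab[a,a+b]$ and $\pab\mapsto\pab[a+b,b]$. Consequently $f_w(1)=a/b$. The converse direction holds because each reduced rational number $a/b$ appears exactly once in the Calkin--Wilf tree.
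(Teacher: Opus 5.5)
Your proposal is correct and takes the same approach as the paper. The paper's proof is just induction on $|w|$, using the two identical sets of successor rules and the common base triple $(t,1,t+1)$. Your Euclidean coprimality argument, your check that $t$ divides neither $\Rfull{w}$ nor $\Rbot{w}$ (both have constant term $1$), and your $t=1$ specialization spell out details the paper leaves implicit in that induction.
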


\begin{proof}
By induction on $|w|$, using \eqref{eq:meander-rankpolynomials} and \eqref{eq:fence-rankpolynomials}. Both triples equal $(t,1,t+1)$ at the root $w=\varepsilon$.
\end{proof}

\subsection{$t$-Rational numbers and unimodality}
\label{sec:rank-unimodality}
%%%%%%%%%%
The final statement in Proposition~\ref{th:t-Calkin-Wilf-tree} is worth highlighting. It says our rational function $p_w/q_w$ built from the ranked meander $M_w=M_{a,b}$ is a $t$-analog of $a/b$. We provide a few more examples before we continue. 

Figure \ref{fig:panyushev_tree} shows a portion of our Calkin--Wilf tree, drawn as ranked meanders. 
\begin{figure}[!hbt]
\centering
\input{figures/panyushev_treeleft_trimmed}
\caption{The effect of left and right Panyushev moves on ranked meanders $M_{w}$ and the corresponding $t$-rational numbers.} 
\label{fig:panyushev_tree}
\end{figure}
Figure \ref{fig:fenceposet_tree} shows a portion of the tree drawn as order ideal posets. 
(The top nodes shown in Figures \ref{fig:panyushev_tree} and \ref{fig:fenceposet_tree} have the common parent $\frac{{t}^3 + {t}^2}{{t}^2 + {t} + 1}$, appearing in Figures~\ref{fig:panyushev-move-on-meanders} and \ref{fig:fence-poset-and-order-ideal}.)
\begin{figure}[!hbt]
\centering
\input{figures/fenceposet_treetrimmed}
\caption{The effect of up and down moves on the lattice of order ideals and the corresponding $t$-rational numbers.}
\label{fig:fenceposet_tree}
\end{figure}
As the examples show, the coefficients appearing in the numerator and denominator are somewhat mysterious, but the successor rules do provide some rudimentary information about these polynomials. Some notation first: 
if $h(t) = h_mt^m + \cdots + h_1t + h_0$ is a polynomial in $\ZZ[t]$ with $h_m\neq0$, we write $\mathsf{supp}(h)$ for the subset of indices $i\in[0,m]$ with $h_i \neq0$.

\begin{corollary}\label{th:leading-terms-and-support}
Let $w=w_1\cdots w_l \in \{\pL,\pR\}^*$ and let $i$ and $j$, respectively, be the indices of the rightmost occurrences of $\pL$ and $\pR$ within $w$, taking either  to be 0 if the corresponding symbol is not present. Writing $p_w/q_w$ for the rational function $\overline{R}_{w}(t)$, we have
\begin{gather*}
    \mathsf{supp}(p_{w}) = [l-j+1,l+1]
    \ \ \qand \ \ 
    \mathsf{supp}(q_{w}) = [0,i] 
\end{gather*}
Furthermore, the leading and trailing coefficients on these supports equal 1 in all cases. \qed
\end{corollary}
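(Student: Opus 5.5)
We prove the corollary by induction on the length $l=|w|$, using the recursion \eqref{eq:meander-rankpolynomials}, or equivalently the successor rules $p_{w\pL}=tp_w$, $q_{w\pL}=p_w+q_w$, $p_{w\pR}=t(p_w+q_w)$, $q_{w\pR}=q_w$ of Proposition~\ref{th:t-Calkin-Wilf-tree}. The base case is $w=\varepsilon$, where $l=0$ and $i=j=0$. The claimed supports are then $[1,1]$ and $[0,0]$, which agree with $p_\varepsilon=t$ and $q_\varepsilon=1$, and all extreme coefficients are $1$. Everything in sight has nonnegative coefficients, so no cancellation can occur. Hence the support of a sum is the union of the supports, and the extreme coefficient of a sum at an endpoint is the sum of the contributions from the summands reaching that endpoint.

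For the inductive step, write $w$ with parameters $l,i,j$ and suppose inductively that
\[
\mathsf{supp}(p_w)=[l-j+1,\,l+1] \qquad\text{and}\qquad \mathsf{supp}(q_w)=[0,i],
\]
with leading and trailing coefficients equal to $1$. We first record the key inequality $i\le l$ and $l-j+1\ge 1$. This gives $\max\mathsf{supp}(q_w)=i<l+1=\max\mathsf{supp}(p_w)$, and it also gives $\min\mathsf{supp}(q_w)=0<\min\mathsf{supp}(p_w)$.

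Suppose we append $\pL$, so the new parameters are $l+1$, with $i$ replaced by $l+1$ and $j$ unchanged. Then $p_{w\pL}=tp_w$ has support $[l-j+2,l+2]=[(l+1)-j+1,(l+1)+1]$, as required. Next, $q_{w\pL}=p_w+q_w$ has support $[0,i]\cup[l-j+1,l+1]$. This must equal $[0,l+1]$, which needs the two intervals to overlap or abut, i.e.\ $l-j+1\le i+1$. Since one of $\pL,\pR$ is the last letter of $w$ (when $l\ge1$), we have $\max(i,j)=l$. If $j=l$, the lower endpoint is $1\le i+1$, fine. If instead $i=l$, then $l-j+1\le l+1$ holds trivially. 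The leading coefficient of $q_{w\pL}$ comes only from $p_w$ (degree $l+1>i$), and its trailing coefficient comes only from $q_w$ (since $0<l-j+1$); both are $1$.

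Appending $\pR$ is symmetric. The new parameters have $j$ replaced by $l+1$, so $q_{w\pR}=q_w$ is unchanged with support $[0,i]$. We have $p_{w\pR}=t(p_w+q_w)$, and $p_w+q_w$ has support $[0,l+1]$ by the same overlap argument. Hence $p_{w\pR}$ has support $[1,l+2]=[(l+1)-(l+1)+1,(l+1)+1]$, with extreme coefficients $1$ for the same reason.

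The only point needing care is the contiguity of $\mathsf{supp}(p_w)\cup\mathsf{supp}(q_w)$, handled above via $\max(i,j)=l$ for $l\ge1$. The case $l=0$ must be checked directly: $[0,0]\cup[1,1]=[0,1]$. Alternatively, contiguity follows from the earlier observation that the ranks of the ranked meander form a contiguous set, because $p_w+q_w=\Rfull{w}$.
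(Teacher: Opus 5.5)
Your induction on $|w|$ using the successor rules $p_{w\pL}=tp_w$, $q_{w\pL}=p_w+q_w$, $p_{w\pR}=t(p_w+q_w)$, $q_{w\pR}=q_w$ is correct, and it is the argument the paper leaves implicit: the paper states the corollary without proof, as an immediate consequence of the successor rules in Proposition~\ref{th:t-Calkin-Wilf-tree}. You handle the one substantive point properly, namely that $\mathsf{supp}(p_w)\cup\mathsf{supp}(q_w)=[0,l+1]$ because $l\le i+j$; this inequality also holds when $l=0$, so the separate base check there is harmless but not needed.
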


%%%%% (keep hidden but don't delete) %%%%%
% Our $t$-Calkin--Wilf tree is different from the tree introduced by Bates and Mansour \cite{bates2011q-calkinwilf}, which starts from a $t$-deformation of Calkin's formula for enumerating the tree by rows. 
%%%%%%%%%
We note that our rational function analogs are closely related to those of  
Morier-Genoud and Ovsienko \cite{morier2020q-deformed}, who define a rational function $[\frac{a}{b}]_q$ of any $a/b>1$ from a $q$-analog of its regular continued-fraction representation. Compare the two versions of $25/11$, appearing below.
\begin{align}
\label{eq:t-rationals}
    \bigl[\tfrac{25}{11}\bigr]_t &= \dfrac{t^8+2t^{7\,}+4t^{6\,}+5t^{5\,}+5t^{4\,}+4t^{3\,}+3t^2+{t}}{t^{5}+2t^{4}+3t^{3\,}+2t^{2\,}+2{t}+1}
    \\[1ex]
\label{eq:q-rationals}
    \bigl[\tfrac{25}{11}\bigr]_q &= \dfrac{q^7+2q^6+4q^5+5q^4+5q^3+4q^2+3q+1}{q^5+2q^4+3q^3+2q^2+2q+1}
\end{align}
It happens that the continued-fraction representation of a rational number is related to its address in the Stern--Brocott tree. See \cite{deluca2011christoffel} for details, where De Luca and Reutenauer also give a precise dictionary between addresses of a rational number in the Stern--Brocott tree and in the Calkin--Wilf tree.
The reader may use this dictionary, our corollary above, and \cite[Cor.~1.7]{morier2020q-deformed}, to make precise the evident link revealed by \eqref{eq:t-rationals} and \eqref{eq:q-rationals}.

The reader will note that, in all examples appearing thus far, the coefficient sequences for all numerators and denominators are unimodal. Likewise for the coefficients of the full rank polynomial $R_w(t)$. 
This phenomenon appears as a conjecture in \cite{morier2020q-deformed}; it was settled by Kantarc{\i}~O\u{g}uz and Ravichandran.

\begin{theorem}[\cite{kantarci2023rank}, Theorem 1.2]
\label{th:rank-unimodal}
The rank polynomial $\Jfull{w}(t)$ corresponding to any fence poset $P_w$ is unimodal.
\end{theorem}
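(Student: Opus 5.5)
The statement is Theorem~\ref{th:rank-unimodal}, cited from \cite{kantarci2023rank}. The paper does not reprove it; it uses it through Proposition~\ref{th:t-Calkin-Wilf-tree} to get unimodality of $\Rfull{w}(t)$. Still, here is how I would attempt a self-contained proof of unimodality of $\Jfull{w}(t)$.

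\emph{Reformulation.} First I would decompose $w$ into maximal runs, so that $P_w$ is a fence with segments of lengths $s_1,\dots,s_k$ alternating up and down. Each order ideal $I$ restricts to each segment as an initial or final interval. This expresses $\Jfull{w}(t)$ as a transfer-matrix product: a $2\times2$ matrix product of factors with $t$-integer entries $[s_i]_t$, coupled through the shared extremal vertices. The successor rules \eqref{eq:fence-rankpolynomials} already encode this, since $(\Jtop{w},\Jbot{w})$ is obtained from $(t,1)$ by multiplying by the matrices $\begin{pmatrix} t&0\\1&1\end{pmatrix}$ (for $\pU$) and $\begin{pmatrix} t&t\\0&1\end{pmatrix}$ (for $\pD$).

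\emph{Inductive invariant.} Unimodality alone is not preserved by sums, so I would strengthen the inductive hypothesis. The plan is to show that $\Jtop{w}$ and $\Jbot{w}$ are each unimodal, with supports as in Corollary~\ref{th:leading-terms-and-support} ($\Jbot{w}$ sits at the bottom and $\Jtop{w}$ at the top). I would also track where their modes lie relative to one another: roughly, the mode of $\Jbot{w}$ should be at most the mode of $t\Jtop{w}$. The key fact is that a sum of two unimodal polynomials is unimodal when one is weakly increasing wherever the other is weakly decreasing, for instance when their modes are suitably interlaced. Applying this fact to $\Jtop{w}+\Jbot{w}$ at each step gives unimodality of $\Jfull{w}$. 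I would then verify that the pair $(\Jtop{w\pU},\Jbot{w\pU})$, and likewise for $\pD$, satisfies the same mode-interlacing invariant.

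\emph{Main obstacle.} The invariant as naively stated fails in some cases; the known proof needs a more delicate comparison. If the simple mode interlacing breaks, I would first try replacing it by a coefficientwise comparison. One candidate is the claim that $\Jbot{w}$ is dominated by a shifted $\Jtop{w}$ past the mode. Another is to track the palindromic/symmetric decomposition coming from rotating the fence, which exchanges the roles of $\Jtop{w}$ and $\Jbot{w}$ via $I\mapsto P_w\setminus I$ reversed. Failing that, I would follow \cite{kantarci2023rank}: reduce to fences with few segments via circular-fence symmetry, and handle the general case by an explicit injection on ideals that raises rank below the mode. Finding the right strengthened invariant is the step I expect to be hardest.
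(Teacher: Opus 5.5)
The paper gives no proof of this statement; it imports Theorem~1.2 of \cite{kantarci2023rank}. Insofar as your proposal ultimately falls back on that citation, it agrees with the paper. The self-contained argument you sketch does not close, however, and the step you postpone is in fact the whole theorem. By the successor rules \eqref{eq:fence-rankpolynomials}:
\begin{itemize}
\item $\Jbot{w}=\Jfull{w'}$, where $w'$ is the prefix of $w$ preceding its last $\pU$ (or $\Jbot{w}=1$ if there is none);
\item $\Jtop{w}=t^{k+1}\Jfull{w''}$, where $w''$ is the prefix preceding the last $\pD$ and $k$ is the number of trailing $\pU$'s (or $\Jtop{w}=t^{|w|+1}$ if there is no $\pD$).
\end{itemize}
So unimodality of each piece is free by induction. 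Everything hinges on showing that $\Jfull{w}=\Jfull{w'}+t^{k+1}\Jfull{w''}$ is unimodal, and your proposal supplies no working invariant for that.

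The tools you propose for this step fail as stated. Your ``key fact'' is false. One summand rising wherever the other falls is exactly the situation between two distinct modes, and that is where a sum can dip: $(1,3,1,0,0)+(0,0,1,3,1)=(1,3,2,3,1)$. The correct criterion in terms of modes alone is that the modes coincide or are adjacent. That condition cannot be maintained inductively. For $w=\pD\pU\pD\pU\pU\pU$ one finds $\Jbot{w}=1+2t+3t^2+4t^3+4t^4+3t^5+t^6$, with modes $3$ and $4$, and $\Jtop{w}=t^4+t^5+2t^6+t^7$, with mode $6$. Their sum $1+2t+3t^2+4t^3+5t^4+4t^5+3t^6+t^7$ is unimodal only because the drop of $\Jbot{w}$ over degrees $4$ to $6$ outweighs the rise of $\Jtop{w}$ there. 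Your ordering condition (mode of $\Jbot{w}$ at most the mode of $t\Jtop{w}$) holds in this example but cannot detect this. Any successful inductive hypothesis must therefore compare coefficient \emph{magnitudes} across the gap between the modes, not just mode positions. The cited work instead establishes a coefficient-level statement, the strictly stronger interlacing property recalled in the Remark after Theorem~\ref{th:rank-unimodal}. Until you formulate and prove such a quantitative invariant, your argument is the citation plus a heuristic plan.
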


We give a few further remarks about these $t$-rational numbers, for the interested reader. These will not be needed in what follows.

\begin{remark}
An even stronger result than Theorem \ref{th:rank-unimodal} appears in \cite{kantarci2023rank}, namely that the rank sequences are \emph{interlacing},%
\footnote{A sequence $(r_0, r_1, \ldots, r_k)$ is interlacing if either $r_0 \leq r_k \leq r_1 \leq r_{k-1} \leq \cdots$ or $r_k \leq r_0 \leq r_{k-1} \leq r_{1} \leq \cdots$ is true.} as first conjectured in \cite{mcconville2021rank-unimodality}. 
From Proposition \ref{th:t-Calkin-Wilf-tree}, we know that every $\Rtop{w}(t)/\Rbot{w}(t)$ appearing in the tree is distinct. Might that also hold for $\Rfull{w}(t)$? No, as $R_{3,13}(t) = R_{5,11}(t) = t^7 +  2t^6 + 3t^5 +3t^4 + 3t^3 +2t^2+ t + 1$.
As already noted in \cite{morier2020q-deformed}, the numbers ${[\frac{a}{b}]}_t$ are genuine rational number deformations, not merely ratios $[a]_t / [b]_t$ of integer deformations. For example, the denominator of \eqref{eq:t-rationals} disagrees with that of ${[\frac{5}{11}]}_t$, which is $t^6+2t^5+2t^4+2t^3+2t^2+t+1$. 
See \cite{mcconville2026hyperbinary} for additional work on the $t$-rational numbers.
\end{remark}

%%%%%%%%%%%%%%%%%%%%%%%%%%%%
\section{Good Gradings \& Proof of Main Result}
\label{sec:good gradings}

In this section we prove Conjecture \ref{conj} for maximal parabolic subalgebras of $\sln$, {\it i.e.}, in the case $\L=\pab$. The proof uses the unimodality of the rank polynomial $\Rfull{a,b}(t)$ of the meander together with the notion of a good grading of a Lie algebra.

\subsection{Even good gradings and pyramids}
Let $\mathfrak g$ be a finite-dimensional Lie algebra over a field of characteristic zero. Following Elashvili and Kac \cite{elashvili2005classification}, a $\ZZ$-grading $\mathfrak g =\bigoplus _{k\in \ZZ}\mathfrak g_k$ is called \demph{good} if there is an element $e\in \mathfrak g_2$ such that the following two properties hold
\begin{itemize}
\item $\ad_e: \mathfrak g_j\rightarrow \mathfrak g_{j+2}$ is injective for $j\leq -1$,
\item $\ad_e: \mathfrak g_j\rightarrow \mathfrak g_{j+2}$ is surjective for $j\geq -1$.
\end{itemize}
In the case that these properties hold, the element $e$ is called good.  

A good grading is called \demph{even} if $\mathfrak g_k=0$ for all odd $k$.  Only even good gradings are needed here. In this case, the only non-zero graded components of $\mathfrak g$ have even degree. Since the nilpotent element $e$ has degree two, we may divide all degrees by $2$ and assume that $e\in \mathfrak g_1$. We thus can assume $e\in \mathfrak g_1$ and that
\begin{itemize}
\item $\ad_e: \mathfrak g_j\rightarrow \mathfrak g_{j+1}$ is injective for $j\leq -1$,
\item $\ad_e: \mathfrak g_j\rightarrow \mathfrak g_{j+1}$ is surjective for $j\geq 0$.
\end{itemize}
We will use this rescaled convention in what follows. Two consequences of a rescaled even good grading are 
\begin{gather}\label{eq:good-is-unimodal}
\begin{aligned}\dim\mathfrak g_j & \leq\dim\mathfrak g_{j+1} \quad\text{for }j\leq -1,\quad \text{and}\\
\dim\mathfrak g_j & \geq\dim\mathfrak g_{j+1}
\quad\text{for }j\geq 0.
\end{aligned}
\end{gather}
It follows that the sequence of dimensions of the graded components of $\mathfrak g$ is unimodal around $0$.

We now restrict to the case of even good gradings of $\sln$, or, equivalently, of $\gln$. Elashvili and Kac \cite[Proposition~4.3]{elashvili2005classification} show that a unimodal sequence of size $n$ gives rise to a good grading of $\gln$. Suppose $\bc=(c_0,\ldots ,c_m)$ is a unimodal sequence of integers such that
\[0<c_0\leq c_1\leq \cdots \leq  c_k\geq  c_{k+1}\geq \cdots \geq c_m>0\qquad \text{and}\qquad c_0+\cdots +c_m=n.\]
The sequence determines a {pyramid} in the sense of \cite[Section~4]{elashvili2005classification} which can be visualized as follows. A \demph{pyramid} consists of $n$ equal-sized boxes arranged in columns numbered $0, ..., m$ from left to right, with the $j$-th column having $c_j$ boxes. The {pyramid} perspective is to treat the boxes as being arranged in rows (with the length of row $i$ being the number of elements in the sequence $\bc$ that are equal to or greater than $i$). 
Say $P$ is a \demph{filled pyramid} if $P$ comes from a unimodal sequence, as just described, and its boxes are filled with the integers $[1,n]$ in some way. See \eqref{eq:filled-pyramid} for a filled pyramid with row lengths $(5,3,2)$ associated to the unimodal sequence $(2,3,3,1,1)$.
\begin{gather}
\label{eq:filled-pyramid}
\raisebox{-.5\height}{\fskyline{{7,3},{2,6,10},{4,1,9},{8},{5}}}
\end{gather}
The good grading of $\gln$ associated to a filled pyramid $P$ is by \demph{column weight}: the matrix unit $\be_{i,j}$ has column weight $\col(j)-\col(i)$, where $\col(i)$ and $\col(j)$ denote the column indices of $i$ and $j$ within $P$, respectively. 
The corresponding good element $e\in(\gln)_1$ is given by 
\begin{equation}
\label{eq:nilpotent-e}
    e=\sum_{i,j}\be_{i,j},
\end{equation}
where the sum is over all pairs $i,j$ such that $i$ and $j$ are consecutive entries in any row, reading from left to right. 

\begin{example} 
Consider the filled pyramid $P$ from \eqref{eq:filled-pyramid}, associated to the unimodal sequence $(2,3,3,1,1)$ for $\fg=\gl_{10}$. The good element $e$ for $\fg$ is 
\[
    e =\be_{7,2}+\be_{2,4}+\be_{4,8}+\be_{8,5}
    +\be_{3,6}+\be_{6,1}
    +\be_{10,9}.
\]
The graded dimension sequence for $\fg$ associated to $P$ is
\[
    \begin{array}{c|rrrrrrrrr}
    \lambda & -4 & -3 & -2 & -1 & 0 & 1 & 2 & 3 & 4 \\
    \hline
    d_\lambda & 2 & 5 & 12 & 19 & 24 & 19 & 12 & 5 & 2
    \end{array}.
\]
%where, {\it e.g.}, the number $d_1=19$ is the sum of products of consecutive column heights, $2\cdot3 + 3\cdot3 + 3\cdot1 + 1\cdot 1$.
The elements 
$\bigl\{\be_{5,3}, \be_{5,7} \bigr\}$ span $\fg_{-4}$, 
and an elementary computation shows that $\ad_{e}(\fg_{-4})$ is spanned by
$
    \bigl\{\be_{8,3}-\be_{5,6}, \ 
    \be_{8,7}-\be_{5,2} 
    \bigr\}.
$
Hence $\ad_e: \fg_{-4}\rightarrow \fg_{-3}$ is injective.
\end{example}

\begin{remark}\label{rem:identical-or-conjugate} 
By construction, two fillings of a pyramid with identical column contents yield the same good grading of $\gln$. More generally, different fillings of the same pyramid may have different good elements $e$ but all have the same Jordan type and determine conjugate gradings of $\gln$. Specifically, the Jordan form of any good $e$ is associated to the partition $(\rho_1, \ldots, \rho_k)$ where $\rho_j$ is the number of entries in row $j$ of the pyramid, with $\rho_1$ representing the bottom row. Thus each row of the pyramid gives a Jordan chain for $e$. 
\end{remark}

\subsection{The meander grading is good}
Now consider once again a Frobenius maximal parabolic subalgebra $\pab$ of $\sln$. There are two natural gradings of $\gln$ associated to $\pab$. The first is determined by the eigenvalues of $\ad_{\hatF}$ which are computed as path weights in the ranked meander $M_{a,b}$. The second is determined by a pyramid associated to $M_{a,b}$, with grading computed as column weights for certain fillings. To match conventions in \cite{elashvili2005classification}, we enumerate the ranks in the graph from highest to lowest. That is, if $\Rfull{a,b}(t)=r_mt^m+\cdots + r_1t + r_0$ is the rank polynomial, we put $\bc=(c_0,\ldots, c_m) = (r_m, \ldots, r_0)$. By Theorem \ref{th:rank-unimodal} $\bc$ is unimodal and $c_0+\ldots +c_m=n$, so it determines a pyramid. The next result shows how these gradings of $\gln$ are related.

\begin{proposition}
\label{th:meander-grading-good}
For a Frobenius maximal parabolic $\pab$, the grading of $\gln$
determined by $\ad_{\widehat F}$ is a good grading.
\end{proposition}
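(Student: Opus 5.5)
The plan is to fill the pyramid attached to $\bc=(r_m,\ldots,r_0)$ by rank. Column $k$ will hold exactly the vertices $i$ of $M_{a,b}$ with $\rnk(i)=m-k$. This is possible because column $k$ has $c_k=r_{m-k}$ boxes, which is precisely the number of vertices of rank $m-k$. Theorem~\ref{th:rank-unimodal} together with Proposition~\ref{th:t-Calkin-Wilf-tree} guarantees that $\bc$ is unimodal, so this really is a pyramid in the sense of Elashvili--Kac.

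Next we compare the two gradings on matrix units. Recall from Section~\ref{subsec:principal-spectrum} that every $\be_{i,j}$ is an eigenvector of $\ad_{\hatF}$ with eigenvalue $\lambda_{i,j}=\rnk(i)-\rnk(j)$. Its column weight is
\[
\col(j)-\col(i)=(m-\rnk(j))-(m-\rnk(i))=\rnk(i)-\rnk(j).
\]
So the two gradings agree on the basis $\{\be_{i,j}\}$ of $\gln$, and hence agree as gradings, with $\mathfrak g_\lambda=(\gln)_\lambda$ for every $\lambda$. By \cite[Proposition~4.3]{elashvili2005classification}, the column-weight grading of any filled pyramid is an even good grading; in the rescaled convention, the element $e$ of \eqref{eq:nilpotent-e} is good. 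Therefore the grading by $\ad_{\hatF}$ is good. By Remark~\ref{rem:identical-or-conjugate}, the result does not depend on how vertices are arranged within a column, since only the column contents matter.

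The main obstacle is making sure the hypotheses of the Elashvili--Kac construction are met exactly. Two points need checking.
\begin{itemize}
\item Every rank between $0$ and $m$ must be achieved, so that all $c_k>0$. This follows because the meander is connected and adjacent vertices differ in rank by $1$, so the set of ranks has no gaps.
\item The orientation convention must be right: we need $\ad_e$ to raise the $\ad_{\hatF}$-eigenvalue by $+1$ and not $-1$. Consecutive row entries $i,j$ lie in columns $k,k+1$, so $\lambda_{i,j}=1$ and $e\in\mathfrak g_1$. If the convention turned out reversed, we would replace $e$ by its transpose, using the symmetry $\delta_\lambda=\delta_{-\lambda}$.
\end{itemize}
Beyond these checks, the argument is a direct identification of the two gradings.
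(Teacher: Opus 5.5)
Your proposal is correct and takes essentially the same route as the paper. You place vertex $i$ in column $m-\rnk(i)$, observe that the column weight $\col(j)-\col(i)$ equals $\rnk(i)-\rnk(j)=\lambda_{i,j}$, and invoke the Elashvili--Kac construction, which applies because $\bc$ is unimodal by Proposition~\ref{th:t-Calkin-Wilf-tree} and Theorem~\ref{th:rank-unimodal}. Your extra checks (no gaps in ranks, and $e\in\mathfrak g_1$ with the right sign) are sound, and the transpose fallback is never needed since the orientation already comes out correctly.
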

\begin{proof}
 Fill the columns of the pyramid associated to $\pab$ with the integers $[1,n]$ in any way that respects their ranks in $M_{a,b}$. Specifically, if $\rnk(i)=i'$ in $M_{a,b}$, then place $i$ in column $m-i'$ of the pyramid. Then $\be_{i,j}$ has column weight $(m-j')-(m-i') = i'-j'$ for the corresponding good grading of $\gln$. Now recall that the integer grading on $\gln$ determined by $\ad_{\hatF}$ can be computed as difference in ranks within $M_{a,b}$, {\it i.e.}, $\ad_{\hatF}\be_{i,j}=\lambda_{i,j}\be_{i,j}$, when $\lambda_{i,j} = \rnk(i)-\rnk(j) = i'-j'$. Therefore the good grading determined by this filled pyramid is exactly the same as the grading determined by $\ad_{\hatF}$.
\end{proof}
\begin{corollary}
\label{cor:conjecture-part2-maximal}
Part~{\rm (2)} of Conjecture~\ref{conj} holds for Frobenius maximal
parabolic subalgebras. More precisely, if $\pab$ is Frobenius, then
\[
\dimvec(\gln;\pab)
\]
is unimodal and symmetric about $0$.
\end{corollary}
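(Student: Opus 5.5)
The plan is to combine Proposition~\ref{th:meander-grading-good} with the general consequence \eqref{eq:good-is-unimodal} of an even good grading, and then add the symmetry already established in Section~\ref{sec:frobenius}. First, by Theorem~\ref{th:rank-unimodal} and Proposition~\ref{th:t-Calkin-Wilf-tree}, the rank polynomial $\Rfull{a,b}(t)=\Jfull{w}(t)$ has a unimodal coefficient sequence. Every rank between $0$ and $m$ is attained in the connected ranked meander, so all $c_j>0$ and the reversed sequence $\bc=(r_m,\ldots,r_0)$ is a legitimate pyramid datum with $\sum c_j=n$.

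By \cite[Proposition~4.3]{elashvili2005classification}, the column-weight grading of any filling of this pyramid is an even good grading of $\gln$, written in the rescaled convention. In particular, a good element $e\in\fg_1$ exists with $\ad_e$ injective on $\fg_j$ for $j\le -1$ and surjective on $\fg_j$ for $j\ge 0$.

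Proposition~\ref{th:meander-grading-good} identifies this grading, for the rank-respecting filling, with the $\ad_{\hatF}$-eigenspace decomposition. That is, $\fg_\lambda$ is exactly the span of the $\be_{i,j}$ with $\lambda_{i,j}=\lambda$, so $\delta_\lambda=\dim\fg_\lambda$ in the graded sense.

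Applying \eqref{eq:good-is-unimodal} gives $\delta_j\le\delta_{j+1}$ for $j\le-1$ and $\delta_j\ge\delta_{j+1}$ for $j\ge0$. Hence $\dimvec(\gln;\pab)$ is unimodal with its peak at $0$.

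Symmetry about $0$ was already noted in Section~\ref{sec:frobenius}, from $\lambda_{i,j}=-\lambda_{j,i}$, so $\delta_\lambda=\delta_{-\lambda}$. The support is the unbroken string $\{-s,\ldots,s\}$ because the meander is connected. In fact $s=m$ and $\delta_{\pm m}=c_0c_m$, though this is not needed.

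The only point needing care is confirming that Elashvili--Kac's even good grading, with degrees halved, matches the rescaled convention exactly. This should hold because the column weights are precisely $\col(j)-\col(i)$ and $e$ has weight $1$. Everything else is direct citation.
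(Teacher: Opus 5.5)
Your proposal is correct and is essentially the paper's proof: Proposition~\ref{th:meander-grading-good} shows that the $\ad_{\hatF}$-grading of $\gln$ is an even good grading, which gives the monotonicity, and symmetry comes from $\lambda_{i,j}=-\lambda_{j,i}$. The one difference is cosmetic. The paper uses only injectivity on negative degrees and gets the nonincreasing half for positive degrees from symmetry, whereas you read that half directly off the surjectivity part of \eqref{eq:good-is-unimodal}; either works.
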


\begin{proof}
The symmetry about $0$ was established in Section~\ref{subsec:principal-spectrum}.
By Proposition~\ref{th:meander-grading-good}, the grading of $\gln$
determined by $\ad_{\widehat F}$ is a good grading.
Hence the dimensions of its eigenspaces are nondecreasing up to degree $0$. By symmetry, it follows that the sequence is unimodal. 
\end{proof}
Next we turn to part~{\rm (1)} of Conjecture~\ref{conj}. First note that Proposition~\ref{th:meander-grading-good} gives a good grading of $\gln$ associated to any Frobenius maximal parabolic $\pab$. However, some care needs to be taken to show that this grading can be used to show that $\dimvec(\pab)$ is unimodal around $\frac12$. The subtlety is that while a good nilpotent $e$ acts injectively on negative weight spaces of the $\gln$ grading, it does not immediately follow that it also acts injectively on negative weight spaces of $\pab$. 
In order to establish this we will show $e$ can be chosen to be an element of $\pab$.

Fix a Frobenius maximal parabolic $\pab$ and its associated pyramid. Recall that a good nilpotent $e$ for the grading on $\gln$ determined by $\ad_{\hatF}$ is determined from the filled rows of the pyramid, see  \eqref{eq:nilpotent-e}. The matrix units not contained in $\pab$ are those $\be_{i,j}$ with $i\in \{a+1,\ldots, n\}$ and $j\in \{1, \ldots, a\}$. So to ensure that $e\in \pab$, we must find a filled pyramid $P$ satisfying the following.
\begin{gather}
\label{eq:compatibility-criteria}
\begin{tabular}{p{.89\textwidth}}
\emph{(Compatibility criterion.)} $P$ is a filled pyramid for Frobenius maximal parabolic $\pab$, and no entry from $\{a+1,\ldots, n\}$ immediately precedes one from $\{1, \ldots, a\}$ in any row of~$P$.
\end{tabular}
\end{gather}

To this end define the subsets
\[
A=\{1, \ldots, a\}, \qquad B=\{a+1, \ldots, n\},\qquad \text{and} \qquad C=\{1, \ldots, n\},
\]
which are naturally partitioned by the filling. Concretely, put $A=\bigsqcup_{t=0}^m A_t$, with $i\in A_t$ if and only if $\col(i)=t$; and likewise for $B$ and $C$. Note that $(|C_0|, \ldots, |C_m|)$ is precisely the unimodal sequence $\bc$ for the pyramid. Define two additional sequences $\ba$ and $\bb$ analogously. 
% Note from Corollary \ref{th:leading-terms-and-support} that beginning entries in $\bb$ and ending entries in $\ba$ will be zero.
It follows from Proposition \ref{th:t-Calkin-Wilf-tree} and Theorem \ref{th:rank-unimodal} that both are unimodal and we have $a_t + b_t = c_t$ for all $t$.
We say that $[\ba, \bb, \bc]$ is a \demph{good triple} if the following two properties hold
\[
    c_s\leq c_{t}\Longrightarrow b_s\leq b_{t} \qquad\text{and}\qquad c_s\geq c_{t} \Longrightarrow a_s\geq a_{t} \qquad \text{for all}\qquad 0\leq s<t\leq m.
\]
See Figure \ref{fig:good-triples}. 
Equivalently, 
\begin{gather}
    \label{eq:good-triple-with-max}
    b_s \leq b_t + \max\{0,c_s-c_t\} \qquad \text{for all}\qquad 0\leq s<t\leq m.
\end{gather}
%%%
\begin{figure}[!hbt]
\centering
\input{figures/three_triples.tex}

\caption{Three triples $[\ba, \bb, \bc]$, with $\ba$ and $\bb$ represented by unfilled and filled blocks, respectively. The first fails the good-triple condition between columns 1 and 2; the second fails between 0 and 2. 
The last triple is good. (It comes from the ranked meander $M_{10,13}$ and is filled using the algorithm in the proof of Proposition \ref{prop:good-e}.)}
\label{fig:good-triples}
\end{figure}

We next show that the triple $[\ba, \bb, \bc]$ associated to $\pab$ is always good. Without loss of generality, we may assume $a>b$. Indeed, the outer automorphism of $\sln$ induced by the reversal of the Dynkin diagram interchanges $\pab$ and $\mathfrak{p}(b,a)$ and it reverses the ranks in their associated meanders. The two defining conditions of a good triple are swapped and thus the triple associated to $\pab$ is good if and only if the same is true for $\mathfrak{p}(b,a)$.

Suppose $a>b$. Then from Panyushev reduction we have 
\[
\L((\bar{n})\mid(\bar{a}, \bar{b})) \xmapsto{\ \pR\ }\L((n)\mid(a, b)) \quad \text{with}\quad \bar{n}=a,\,\,\bar{a}=a-b,\,\,\bar{b} = b.\]
Let
$[\bar{\ba},\bar{\bb},\bar{\bc}]$
denote the triple associated to the smaller parabolic.
Proposition \ref{th:meander-graph-growth-properties} provides that 
\[
a_r=\bar{c}_r,\qquad b_r=\bar{b}_{r-1},\qquad \text{and}\qquad c_r=\bar c_r+\bar b_{r-1}. 
\]

\begin{proposition}
\label{prop:good-triple}
The triple $[\ba, \bb, \bc]$ associated to every
Frobenius maximal parabolic $\pab$ is good.
\end{proposition}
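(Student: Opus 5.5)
We argue by induction on $n$ along the Panyushev tree, using the reduction already set up before the statement. By the Dynkin-diagram symmetry discussed above we may assume $a>b$, except at the root. For the root $\pab[1,1]$ the triple is $\bc=(1,1)$, $\ba=(1,0)$, $\bb=(0,1)$, and both defining conditions of a good triple are checked directly. For $a>b$ we write $\pab$ as the $\pR$-child of the Frobenius parabolic with $\bar a=a-b$, $\bar b=b$, $\bar n=a$. By the induction hypothesis (applied after the symmetry if needed), the smaller triple $[\bar\ba,\bar\bb,\bar\bc]$ is good. We then use the relations
\[
a_r=\bar c_r,\qquad b_r=\bar b_{r-1},\qquad c_r=\bar c_r+\bar b_{r-1}.
\]
We adopt the convention that all out-of-range entries are $0$: $\bar b_{-1}=0$, and $\bar c_{\bar m+1}=0$ where $\bar m+1=m$. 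Padding a positive unimodal sequence with zeros at both ends keeps it unimodal. The only unimodality fact we use is the following: if $u\le v\le w$, then $\bar c_v\ge\min(\bar c_u,\bar c_w)$. This holds by Theorem~\ref{th:rank-unimodal} applied to $\bar\bc$.

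\emph{Second condition} ($c_s\ge c_t\Rightarrow a_s\ge a_t$ for $s<t$). We prove the contrapositive: assume $s<t$ and $\bar c_s<\bar c_t$, and show $c_s<c_t$. By unimodality $\bar c_{s-1}\le\bar c_s$, since otherwise $\bar c_s<\min(\bar c_{s-1},\bar c_t)$. Likewise $\bar c_{t-1}\ge\min(\bar c_s,\bar c_t)=\bar c_s$. Hence $\bar c_{s-1}\le\bar c_{t-1}$, and the first good-triple condition for the smaller triple, at indices $s-1<t-1$, gives $\bar b_{s-1}\le\bar b_{t-1}$. If $s=0$ this is trivial because $\bar b_{-1}=0$. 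Adding this to $\bar c_s<\bar c_t$ yields $c_s<c_t$.

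\emph{First condition} ($c_s\le c_t\Rightarrow b_s\le b_t$ for $s<t$). Again we take the contrapositive: assume $\bar b_{s-1}>\bar b_{t-1}$. Then necessarily $s\ge1$, and the inductive first condition forces $\bar c_{s-1}>\bar c_{t-1}$. Since $s-1<s\le t-1<t$, unimodality gives $\bar c_s\ge\min(\bar c_{s-1},\bar c_{t-1})=\bar c_{t-1}$. Unimodality also gives $\bar c_t\le\bar c_{t-1}$, because otherwise $\bar c_{t-1}<\min(\bar c_{s-1},\bar c_t)$. Hence $\bar c_s\ge\bar c_t$, and adding $\bar b_{s-1}>\bar b_{t-1}$ gives $c_s>c_t$, as required.

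The main point requiring care is not the inequalities themselves but the bookkeeping. First, the indices $s-1$, $t-1$ and $t=m=\bar m+1$ can fall outside the range of $\bar\bc$ and $\bar\bb$; we must confirm that zero-padding is consistent with the formulas from Proposition~\ref{th:meander-graph-growth-properties} (column $0$ of $\bb$ empty, last column of $\ba$ equal to $\bar c_{\bar m+1}=0$). Second, the symmetry reduction must swap the two conditions while keeping the column ordering and the induction hypothesis correctly oriented. We check both against the running example $M_{10,13}$ of Figure~\ref{fig:good-triples} before finalizing.
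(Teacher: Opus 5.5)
Your proof is correct and follows the paper's argument. You reduce to $a>b$, induct along the Panyushev tree with the relations $a_r=\bar c_r$, $b_r=\bar b_{r-1}$, $c_r=\bar c_r+\bar b_{r-1}$, and prove the first condition by the same contrapositive-plus-unimodality argument. Your explicit treatment of the second condition, which the paper only calls ``similar'', is also right: it uses the \emph{first} condition of the smaller triple together with unimodality of $\bar{\bc}$, and your zero-padding conventions correctly handle the boundary indices $s=0$ and $t=m$.
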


\begin{proof}
The proof is by induction on $n$. For $\mathfrak{p}(1,1)$ the result is clear. The predecessor of $\pab$ in the Panyushev tree is $\mathfrak{p}(\bar a, \bar b)$ with $\bar n =a$, $\bar a = a-b$ and $\bar b =b$. Since $\bar n<n$ the triple $[\bar{\ba},\bar{\bb},\bar{\bc}]$ of the predecessor algebra is good. In what follows we adopt the convention that any term with a negative subscript is zero. 

Assume that $c_s\leq c_{t}$. We need to show that $b_s\leq b_{t}$. Since $b_s=\bar{b}_{s-1}$ and $b_{t}=\bar{b}_{t-1}$, this is equivalent to showing that $\bar{b}_{s-1}\leq \bar{b}_{t-1}$. We argue by contradiction, supposing
\begin{equation}\label{contradiction}
    \bar{b}_{s-1}>\bar{b}_{t-1}.
\end{equation}
	
By induction, the triple $[\bar{\ba},\bar{\bb},\bar{\bc}]$ is good and so we have the implication
\[
    \bar{c}_{s-1}\leq \bar{c}_{t-1}\quad \Longrightarrow \quad \bar{b}_{s-1} \leq \bar{b}_{t-1}
\]
which, by contrapositive, is equivalent to
\[\bar{b}_{s-1}>\bar{b}_{t-1}\quad \Longrightarrow \quad\bar{c}_{s-1}>\bar{c}_{t-1}.\]
The unimodality of the $\bar{c}$-sequence implies that
\[\bar{c}_{s-1}>\bar{c}_{t-1} \quad \Longrightarrow \quad \bar{c}_s\geq \bar{c}_{t}.\]
Thus 
\begin{equation}\label{unimodal}\bar{b}_{s-1}>\bar{b}_{t-1}\quad \Longrightarrow \quad \bar{c}_s\geq \bar{c}_{t}.\end{equation}
Now we can reassemble the pieces and get a contradiction:
\[c_s = a_s+b_s= \bar{c}_s+\bar{b}_{s-1}\quad \text{and}\quad c_{t} = a_{t}+b_{t}= \bar{c}_{t}+\bar{b}_{t-1}.\]
Therefore,
\[c_{t}-c_{s}=\left(\bar{c}_{t}-\bar{c}_s\right) + \left(\bar{b}_{t-1} -\bar{b}_{s-1}\right).\]
Now the first summand of the last equation is non-positive by \eqref{unimodal}, and the second summand is negative by \eqref{contradiction}. Thus their sum is negative and this contradicts our assumption that $c_s\leq c_{t}.$ 
The second good condition that 
\[c_s\geq c_{t} \Rightarrow a_s\geq a_{t}\]
is proved in a similar way.
\end{proof}

Now that we know that any Frobenius $\pab$ has a good triple ${[\ba, \bb, \bc}]$, we can show that there is a filling of the associated pyramid with the property that $e$ lies in $\pab$.

\begin{proposition}
\label{prop:good-e}
Let $\pab$ be a Frobenius maximal parabolic subalgebra. Then there exists a filling of its associated pyramid such that the corresponding good element $e$ lies in $\pab$.
\end{proposition}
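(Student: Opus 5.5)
We need a filling of the pyramid that satisfies the compatibility criterion \eqref{eq:compatibility-criteria}. The filling must respect the column contents forced by Proposition~\ref{th:meander-grading-good}: column $t$ contains exactly the $a_t$ elements of $A_t$ and the $b_t$ elements of $B_t$. By Remark~\ref{rem:identical-or-conjugate} we are free to permute the entries within each column, since this does not change the grading. The good element $e$ joins consecutive row entries, so it lies in $\pab$ exactly when no $B$-entry immediately precedes an $A$-entry in a row.

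The plan is to place, in each column, the $B$-entries in the boxes of the lowest rows and the $A$-entries above them. Column $t$ occupies rows $1,\dots,c_t$. We put $B_t$ in rows $1,\dots,b_t$ and $A_t$ in rows $b_t+1,\dots,c_t$, with arbitrary order inside each block. Rows are contiguous runs of columns, because the pyramid is built from a unimodal sequence. So two boxes are consecutive in row $i$ exactly when they sit in columns $s$ and $s+1$ and $i\le \min(c_s,c_{s+1})$.

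A violation would be a row $i$ with $i\le b_s$, so that the box in column $s$ holds a $B$-entry, while $b_{s+1}<i\le c_{s+1}$, so that the box in column $s+1$ holds an $A$-entry. It therefore suffices to show $b_s\le b_{s+1}+\max\{0,c_s-c_{s+1}\}$, or at least that no such $i$ exists.

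\begin{itemize}
\item If $c_s\le c_{s+1}$, the good-triple condition of Proposition~\ref{prop:good-triple} gives $b_s\le b_{s+1}$. Then $i\le b_s$ forces $i\le b_{s+1}$, so the box in column $s+1$ holds a $B$-entry, which is allowed.
\item If $c_s\ge c_{s+1}$, the other condition gives $a_s\ge a_{s+1}$. A violating $i$ must satisfy $i\le c_{s+1}$ and $i>b_{s+1}=c_{s+1}-a_{s+1}$. Also $i\le b_s=c_s-a_s$. This requires $c_{s+1}-a_{s+1}<c_s-a_s$ together with $i\le c_{s+1}$. But $i\le b_s$ and $i\le c_{s+1}$ mean row $i$ in column $s$ is a $B$-box, and this does not immediately contradict anything.
\end{itemize}

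So bottom-aligning $B$ looks wrong for the second case. The remedy is to use a mixed alignment. Let $r_s$ be the row where the $A$-block of column $s$ starts, and reformulate the requirement as an inequality between successive columns: every row $i$ shared by columns $s,s+1$ that is a $B$-row in column $s$ must also be a $B$-row in column $s+1$. This is the statement that the $B$-rows of column $s$ intersected with $[1,c_{s+1}]$ lie inside the $B$-rows of column $s+1$.

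Choosing the $B$-rows in each column to be an initial segment $[1,b_t]$ turns this into $\min(b_s,c_{s+1})\le b_{s+1}$, which is exactly \eqref{eq:good-triple-with-max} for $t=s+1$ when $c_s\le c_{s+1}$. When $c_s>c_{s+1}$, the condition $\min(b_s,c_{s+1})\le b_{s+1}$ can fail. To fix this, we instead place the $B$-rows on the descending side as a final segment of rows: we use $[c_t-b_t+1,c_t]$ for columns past the peak, that is, $A$ at the bottom there. Then consecutive pairs on the descending side require that rows $\le c_{s+1}$ holding $B$ in column $s$, namely those in $[c_s-b_s+1,c_{s+1}]$, lie in $[c_{s+1}-b_{s+1}+1,c_{s+1}]$. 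This is the condition $c_s-b_s\ge c_{s+1}-b_{s+1}$, i.e. $a_s\ge a_{s+1}$, which the good-triple property supplies.

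The main obstacle is the transition at the peak column(s) where the alignment switches. One must check that a suitable choice, taking the peak column aligned with whichever side the comparison with its neighbor requires, still works. Here the full (non-adjacent) good-triple inequalities \eqref{eq:good-triple-with-max} for $s<t$ should be what is needed. If this is problematic, the fallback is a greedy row-by-row filling from the left, justified via \eqref{eq:good-triple-with-max}. Once the filling is fixed, $e$ from \eqref{eq:nilpotent-e} lies in $\pab$.
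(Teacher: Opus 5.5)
\textbf{Verdict: genuine gap.} Your reduction to adjacent columns is correct, and so are your two one-sided checks. Initial segments of $B$-cells are compatible across a weakly ascending step because $b_s\le b_{s+1}$. Final segments are compatible across a weakly descending step because $a_s\ge a_{s+1}$.

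\textbf{Where it breaks.} The gap is the junction, and it cannot be repaired within your scheme. If column $k$ uses $[1,b_k]$ and column $k+1$ uses $[a_{k+1}+1,c_{k+1}]$, compatibility forces $b_k=0$ or $a_{k+1}=0$. So on the block of columns containing both symbols you are really choosing all-bottom or all-top alignment, and both can fail.

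\textbf{Counterexample.} Take $\fp(17,10)$ (address $\pL\pL\pR\pR\pL\pR$, $n=27$). Here $\ba=(1,2,3,4,4,2,1,0)$, $\bb=(0,0,1,1,2,3,2,1)$ and $\bc=(1,2,4,5,6,5,3,1)$, from \eqref{eq:meander-rankpolynomials} or directly from the meander.
\begin{itemize}
\item Columns $2$ and $3$ each contain one $B$, and column $2$ has height $4$. So column $3$'s $B$ must sit in the same row as column $2$'s.
\item Your options are ``row $1$ or row $4$'' for column $2$ and ``row $1$ or row $5$'' for column $3$. This forces bottom alignment in both.
\item The $B$ in row $1$ then forces bottom alignment in column $4$ (rows $1,2$) and in column $5$ (rows $1,2,3$).
\item All three of column $5$'s $B$'s lie immediately left of cells of column $6$, which has height $3$ but contains only two $B$'s.
\end{itemize}
Hence no assignment of initial/final segments, whatever you do at the peak, satisfies \eqref{eq:compatibility-criteria}. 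A valid filling does exist, but it needs $B$'s in interior rows. For example, take $B$-rows $\{4\},\{4\},\{4,6\},\{3,4,5\},\{2,3\},\{1\}$ in columns $2,\dots,7$.

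\textbf{What is missing.} The missing part is exactly the content of the paper's proof, which is your unexecuted fallback. Fill columns left to right. First put $B$ in every forced cell, meaning a cell whose left neighbour holds $B$. Then put the remaining $B$'s in the topmost free cells.

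The nontrivial claim is that column $t$ never has more than $b_t$ forced cells. The paper proves it as follows.
\begin{itemize}
\item Let $H$ be the set of forced rows of column $t$, and let $s$ be the latest column in which one of these rows' runs of $B$'s begins, say in row $h^*$.
\item Column $s$ contains $B$ in all $|H|$ forced rows.
\item The unforced $B$ at $(s,h^*)$ was placed top-down, so column $s$ also contains $B$ in every row of $(c_t,c_s]$.
\item Applying \eqref{eq:good-triple-with-max} to the generally non-adjacent pair $s<t$ then gives $|H|\le b_t$.
\end{itemize}
Without this argument, or an equivalent one, your proposal does not establish the proposition.
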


\newcommand{\sfA}{\ensuremath{\mathsf{A}}}
\newcommand{\sfB}{\ensuremath{\mathsf{B}}}

\begin{proof}
Let $P$ be the pyramid for $\pab$, let $[\ba,\bb,\bc]$ be a good triple, and recall the subsets $A_t,B_t,C_t$ $(0\leq t \leq m)$ from the running discussion. We give an algorithm that fills each cell of $P$ with one of the symbols $\{\sfA,\sfB\}$ in such a way that column $t$ contains $a_t$ $\sfA$'s and $b_t$ $\sfB$'s, and no $\sfA$ appears immediately to the right of a $\sfB$. The result  follows, since any labeling of the $\sfA$ cells with $A_t$ and the $\sfB$ cells with $B_t$ will then yield a good grading determined by $\ad_{\hatF}$ that  satisfies~\eqref{eq:compatibility-criteria}.

Some terminology will be helpful. Let us index  cells of $P$ by  pairs $(t,h)$, where $t \in [0,m]$ is a column index and $h \in [1,c_t]$ is a row index ({\it i.e.}, height).  If $P$ is partially filled with $\sfA$'s and $\sfB$'s, then we say a cell $(t,h) \in P$ is \emph{forced} if the cell $(t-1,h) \in P$ immediately to its left contains a $\sfB$.

The algorithm processes columns  from left to right.  After columns $0$ through $t-1$ have been filled, the next column $t$ is filled with $b_t$ $\sfB$'s and $a_t$ $\sfA$'s in stages as follows:  
\begin{itemize}
\item[(i)]    
The symbol $\sfB$ is placed in every forced cell. (There are always enough for this. See below.)
\item[(ii)] The remaining $\sfB$'s (if any) are placed in the topmost available empty cells. 
\item[(iii)] The remaining empty cells are filled with $\sfA$'s.
\end{itemize} 
Clearly this algorithm never places an $\sfA$ to the right of a $\sfB$, so we need only verify the claim made in Stage (i). Namely, one never encounters a column $t$ with more than $b_t$ forced cells.
Having filled columns $0,\ldots,t-1$, consider column $t$ ({\it e.g.}, column 5 in Figure~\ref{fig:building-a-filling}).
Let $H$ be the set of all row indices $h \in [1,c_{t}]$ such that cell $(t,h)$ is forced. 
We argue that $|H| \leq b_t$.  If $H=\emptyset$ then this is clear. Otherwise,
for  $h \in H$ let $s_h \leq t-1$ be the leftmost column such that cell $(s_h, h)$ contains $\sfB$.   Now let  $s := \max\{s_h \,:\, h \in H\}$ and suppose this maximum is attained at $h=h^*$. 
Note that cell $(s,h^*)$ contains $\sfB$ and is not forced. 

\begin{figure}[h!bt]
    \input{figures/building-a-filling}
    \caption{A candidate offending column in the proof of Prop.~\ref{prop:good-e}. Here $t=5$, $H=\{2,4\}$, $s=4$, $h^*=4$, $S_0 = \{(4,2),(4,4)\}$, and $S_1=\{(4,5)\}$. 
    The proof argues that $|H|\leq b_5$. See Fig.~\ref{fig:good-triples} for a proper filling.}
    \label{fig:building-a-filling}    
\end{figure}

Consider the disjoint sets of cells  $S_0:=\{(s,h)\,:\, h \in H\}$
and $S_1 := \{(s,k) \,:\, k \in (c_t,c_s]\}$ within column $s$.
Maximality of $s$ ensures each of the cells in $S_0$ contains $\sfB$.  We claim that each cell $(s,k) \in S_1$ also contains $\sfB$. 
Indeed, if $k \in (c_t,c_s]$ then  row $k$ of $P$ meets column $s$ but not column $t$. Since $P$ is a pyramid, and since every row $h \in H$ meets both columns $s$ and $t$, it follows that $k > h$ for all $h \in H$.  In particular, $k > h^*$.   Since  cell $(s,h^*)$ is not forced, the $\sfB$ therein must have been placed in Stage (ii) and thus cell $(s,k)$ already contained a $\sfB$ at the time. 

We have shown there are at least $|S_0|+|S_1|=|H|+\max\{0,c_s-c_t\}$ occurrences of symbol $\sfB$ in column $s$. The good triple condition~\eqref{eq:good-triple-with-max} therefore gives 
$$
b_t + \max\{0,c_s-c_t\} \geq b_s \geq |H|+\max\{0,c_s-c_t\},
$$
hence $|H| \leq b_t$, as desired.
\end{proof}

We can now use the previous results to prove Part~{\rm (1)} of Conjecture~\ref{conj}.
 
\begin{corollary}
\label{cor:conjecture-part1-maximal}
Part~{\rm (1)} of Conjecture~\ref{conj} holds for Frobenius maximal
parabolic subalgebras. More precisely, if $\pab$ is Frobenius, then
\[
\dimvec(\pab)
\]
is symmetric and unimodal about $\frac12$.
\end{corollary}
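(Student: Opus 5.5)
Symmetry about $\frac12$ was already established in Section~\ref{subsec:principal-spectrum}: by Lemma~\ref{th:elementary-properties}(3), $\L_\lambda$ and $\L_{1-\lambda}$ are dual under $B_F$, so $d_\lambda=d_{1-\lambda}$, and by Coll--Hyatt--Magnant the spectrum is the unbroken string $\{-m+1,\ldots,m\}$. It therefore suffices to prove
\[
d_{\lambda}\leq d_{\lambda+1}\qquad\text{for all }\lambda\leq -1 .
\]
By symmetry this gives $d_{\lambda}\geq d_{\lambda+1}$ for $\lambda\geq 1$. The middle values $d_0=d_1$ coincide, so the sequence is unimodal with its peak at $\{0,1\}$.

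For the inequality I will use Proposition~\ref{prop:good-e}. It gives a filling of the pyramid of $\pab$ whose good element $e$ lies in $\pab$. As in the proof of Proposition~\ref{th:meander-grading-good}, the filling respects meander ranks, so the grading $\gln=\bigoplus\fg_\lambda$ by column weight is exactly the $\ad_{\hatF}$-eigenspace decomposition. Since $\pab$ is spanned by matrix units (together with the trace-zero diagonal), each of which is an $\ad_{\hatF}$-eigenvector, we have $\L_\lambda=\pab\cap\fg_\lambda$. By construction $e$ is a sum of $\be_{i,j}$ with $\col(j)=\col(i)+1$. Each such unit has $\ad_{\hatF}$-eigenvalue $\rnk(i)-\rnk(j)$, which equals column weight $1$, so $e\in\L_1$. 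By Lemma~\ref{th:elementary-properties}(1), $\ad_e$ then maps $\L_\lambda$ into $\L_{\lambda+1}$. This map is the restriction of $\ad_e:\fg_\lambda\to\fg_{\lambda+1}$, which is injective for $\lambda\leq -1$ by the (rescaled, even) good grading property. A restriction of an injective map is injective, so $d_\lambda\leq d_{\lambda+1}$ for $\lambda\leq -1$.

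One subtlety needs care: the definition of good grading is for $\gln$, while $\pab\subseteq\sln$. The $\ad_{\hatF}$-eigenvalue of the identity is $0$, so only the degree-$0$ piece differs between $\gln$ and $\sln$. Since $\lambda\leq -1$ never involves degree $0$ as a source, and we only need injectivity for negative sources, there is no issue.

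The main obstacle has already been handled by Propositions~\ref{prop:good-triple} and~\ref{prop:good-e}, namely getting $e$ inside $\pab$. What remains is to check that the chosen filling still induces the $\ad_{\hatF}$-grading, which holds because it places each $i$ in column $m-\rnk(i)$. The rest is bookkeeping.
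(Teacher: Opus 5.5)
Your proposal is correct and follows the paper's proof. You choose the good element $e\in\pab$ via Proposition~\ref{prop:good-e}, restrict the injectivity of $\ad_e$ on the negative degrees of $\gln$ to $\pab$, and then use the $B_F$-duality $d_\lambda=d_{1-\lambda}$ to get both the decreasing half and $d_0=d_1$. Your extra checks ($\L_\lambda=\pab\cap\fg_\lambda$, $e\in\L_1$, and the $\gln$ versus $\sln$ point) are correct bookkeeping that the paper leaves implicit.
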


\begin{proof}
Let
\[
\pab=\bigoplus_{j\in\mathbb Z}\mathfrak p_j(a,b)
\]
be the grading determined by $\ad_{\widehat F}$.
By Proposition~\ref{prop:good-e}, the corresponding good element $e$
may be chosen so that $e\in\pab$. Now $
\ad_e:
\mathfrak p_j(a,b)\longrightarrow\mathfrak p_{j+1}(a,b)
$
is injective for $j\leq -1$ as it is the restriction of the corresponding injective map on negative weight spaces of $\gln$. Thus
$\dim\mathfrak p_j(a,b)\leq
\dim\mathfrak p_{j+1}(a,b)$ for all $j\leq-1.$ Now the duality from the Kirillov form gives $ \dim\mathfrak p_j(a,b)= \dim\mathfrak p_{1-j}(a,b).$
Thus the dimensions of the positive weight spaces are weakly decreasing after degree $1$. The duality also gives $\dim\mathfrak p_0(a,b)=\dim\mathfrak p_1(a,b).$
Thus $\dimvec(\pab)$ is symmetric and unimodal about
$\frac12$.
\end{proof}

Combining the two previous corollaries leads to our main result.
\begin{theorem}
\label{th:main-maximal}
Conjecture~\ref{conj} holds for Frobenius maximal parabolic
subalgebras of $\sln$. More precisely, if $\pab$ is Frobenius, then
\begin{enumerate}
    \item $\dimvec(\pab)$ is symmetric and unimodal about $\frac12$;
    \item $\dimvec(\gln;\pab)$ is symmetric and unimodal about $0$.
\end{enumerate}
\end{theorem}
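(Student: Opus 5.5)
The theorem is the conjunction of Corollary~\ref{cor:conjecture-part1-maximal} and Corollary~\ref{cor:conjecture-part2-maximal}, so the plan is to assemble these two results and check that their hypotheses are met for every Frobenius $\pab$. By Theorem~\ref{th:elashvili-maximal}, $\pab$ is Frobenius exactly when $\gcd(a,b)=1$. These are precisely the nodes of the Panyushev tree, so all the tree-based results apply: Proposition~\ref{th:meander-graph-growth-properties}, Proposition~\ref{th:t-Calkin-Wilf-tree}, and the inductive Proposition~\ref{prop:good-triple}.

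For part (2), I first identify the rank polynomial $\Rfull{a,b}(t)$ with the fence-poset rank polynomial $\Jfull{w}(t)$ via Proposition~\ref{th:t-Calkin-Wilf-tree}. Theorem~\ref{th:rank-unimodal} then shows that its coefficient sequence $\bc$ is unimodal. This sequence therefore defines an Elashvili--Kac pyramid, and by Proposition~\ref{th:meander-grading-good} the column-weight grading coincides with the $\ad_{\hatF}$-grading on $\gln$. The even good-grading inequalities \eqref{eq:good-is-unimodal}, combined with the symmetry $\delta_\lambda=\delta_{-\lambda}$ from Section~\ref{subsec:principal-spectrum}, give unimodality about $0$. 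This is exactly Corollary~\ref{cor:conjecture-part2-maximal}.

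For part (1), I invoke Proposition~\ref{prop:good-triple}, which says the triple $[\ba,\bb,\bc]$ is good. The reduction to $a>b$ is justified by the diagram-reversal automorphism, which swaps the two good-triple conditions. From a good triple, the column-by-column filling algorithm of Proposition~\ref{prop:good-e} produces a filled pyramid satisfying the compatibility criterion \eqref{eq:compatibility-criteria}. Its good element $e$ therefore lies in $\pab$.

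Restricting $\ad_e$ to $\pab$ preserves the grading, because $\pab$ is $\ad_{\hatF}$-stable and $e\in\pab_1$. So injectivity on negative degrees passes from $\gln$ to $\pab$, and the dimensions $d_\lambda$ of the graded pieces of $\pab$ are nondecreasing up to degree $0$. Lemma~\ref{th:elementary-properties}(3) gives $d_\lambda=d_{1-\lambda}$. Together with the unbroken spectrum result of Coll--Hyatt--Magnant, this yields symmetric unimodality about $\tfrac12$, which is Corollary~\ref{cor:conjecture-part1-maximal}.

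Since both corollaries are already established, the proof of the theorem is purely a matter of citing them. The only real content lies upstream. The main obstacle there is the existence of a filling placing $e$ inside $\pab$; its crux is the counting argument in Proposition~\ref{prop:good-e} showing that forced cells never exceed $b_t$, which in turn rests on the goodness inequality \eqref{eq:good-triple-with-max}.
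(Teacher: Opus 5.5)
Your proposal is correct and follows the paper's proof, which likewise just combines Corollary~\ref{cor:conjecture-part1-maximal} for part (1) and Corollary~\ref{cor:conjecture-part2-maximal} for part (2). Your summary of the ingredients behind those corollaries matches the paper's route: fence-poset unimodality of $\Rfull{a,b}(t)$, agreement of the pyramid grading with the $\ad_{\hatF}$-grading, the good triple and filling algorithm placing $e$ in $\pab$, and the $B_F$-duality.
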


\begin{proof}
Part~{\rm (1)} is Corollary~\ref{cor:conjecture-part1-maximal}, and
part~{\rm (2)} is Corollary~\ref{cor:conjecture-part2-maximal}.
\end{proof}

%%%%%%%%%%%%%%%%%%%%%%%%%%%%
\section{Further Remarks}
\label{sec:further remarks}

Now that Conjecture~\ref{conj} has been established for Frobenius maximal
parabolic subalgebras of $\sln$, we conclude with several further
questions. Even in the maximal parabolic case, some natural questions
remain open.

Perhaps the most striking concerns the good grading itself. The pyramid
construction produces an element $e\in\pab$ whose adjoint action is
injective on the negative weight spaces of $\pab$. It follows that a generic element
$e\in\pab_1$ also has this property. Computations suggest that more is true. For every Frobenius maximal
parabolic $\pab$ tested with $n\leq 40$, a generic element
$e\in\pab_1$ also acts surjectively on the positive weight spaces. Thus, the evidence suggests that for a generic $e\in \pab_1$, the map $\operatorname{ad}_e: \pab_{\lambda} \rightarrow \pab_{\lambda+1}$ is injective for $\lambda <0$ and surjective for $\lambda >0$. For this to be a good grading, the map $\ad_e:\pab_0\longrightarrow\pab_1$ would need to be bijective. The special role of this pair of weights is that they are centered around $\frac{1}{2}$, while $\frac{1}{2}$ itself does not occur as a weight. The bijectivity of $\ad_e$ between these two central weight spaces does not hold in general. For example, it fails for a generic $e\in \pab_1$ when $\pab = \mathfrak{p}(19,7)$. So the principal element grading of $\pab$ need not be good in general, but it would be interesting to determine whether a generic $e\in \pab_1$ is injective on the negative weight spaces and
surjective on the positive weight spaces.

Next, Mayers and Russoniello conjecture that $\dimvec(\fp)$ is not only
unimodal, but log-concave 
\cite[Conjecture~75]{mayers2026seaweed}.\footnote{Recall that a sequence $(a_i)$ is \emph{log-concave} if $a_i^2 \geq a_{i-1}a_{i+1}$ for every applicable $i$.}
Computational data for $n\leq 50$ suggest that the corresponding dimension vector $\dimvec(\gln)$ is log-concave as well. It would be interesting to
determine whether these log-concavity properties are true in general. We note, however, that log-concavity fails for
more general Frobenius biparabolic subalgebras; for example,
$\L(5,5\mid 6,2,2)$ provides a counterexample.

Moving beyond the maximal parabolic case to an arbitrary Frobenius
biparabolic subalgebra $\L$ of $\sln$, the unimodality properties
asserted in Conjecture~\ref{conj} remain open. There are several
obstacles to extending our techniques to this setting. First, the rank
polynomial of the meander is known to be unimodal only in the maximal
parabolic case, and this property is needed for our connection with
good gradings of $\gln$. Computational evidence, however, indicates
that the rank polynomial of the meander is unimodal for every
Frobenius seaweed $\L$. (Figure
%\ref{fig:eigentally} and  
\ref{fig:R(alpha|beta)} is one such example.)
%%%
\begin{figure}[!hbt]
  \centering
  \input{figures/Ralpha,beta}
  \caption{The algebra $\L(4,9,4 \mid 8,2,7)$ has rank polynomial $2t^3+6t^2+7t+2$.}
  \label{fig:R(alpha|beta)}
\end{figure}
%%%
If this could be established independently,
then the second part of Conjecture~\ref{conj} would follow, since the
corresponding Elashvili--Kac pyramid could be constructed. One might ask, as a beginning, for a larger class of posets $P$ whose order ideals model the Frobenius meanders $M(\valpha\mid\vbeta)$. One obstacle is that many rank polynomials $R_{(\valpha\mid \vbeta)}(t)$ do not begin and end with coefficient $1$, so a replacement for the full set of order ideals $\cJ(P)$ is also needed. 

The first part of the conjecture presents a further difficulty. Even when the rank polynomial of the meander is unimodal, there are examples for which no
pyramid element $e\in\L$ acts injectively on all of the negative weight spaces of $\L$. More strikingly, computational evidence indicates that for some Frobenius seaweeds no element $e\in\L_1$ has this property. 
For example, if $\L=\L(16\mid 3,2,1,6,4)$,
then not even a generically chosen $e\in\L_1$ acts injectively on $\L_{-1}$.
Thus the principal grading of $\L$ need not be a good grading, even when the corresponding grading of $\gln$ is good. Therefore the good-grading argument used for maximal parabolics does not extend directly to arbitrary Frobenius seaweeds, and a proof of the unimodality conjecture will require other methods.

Beyond type $A$, Joseph developed a substantial general theory of biparabolic subalgebras of semisimple Lie algebras, including their index, semi-invariants, and adapted pairs \cite{joseph2006semi, joseph2007semi, joseph2015integrality}. The unbroken spectrum property is also known in arbitrary type by \cite{CameronCollHyattMagnant}. Much less appears to be known about unimodality of the multiplicities outside type $A$, and it would be interesting to determine whether an analogue of Conjecture~\ref{conj} holds in the other simple Lie types.

\bibliography{biblio}
\bibliographystyle{abbrv}

\end{document}

%% file: custom_tikz_methods.tex
\newcommand{\bgr}[1]{\textcolor{mygreen}{\ensuremath{\mathbf{#1}}}}
\newcommand{\gr}[1]{\textcolor{mygreen}{\ensuremath{#1}}}
\newcommand{\bl}[1]{\textcolor{myblue}{\ensuremath{#1}}}
\newcommand{\rd}[1]{\textcolor{red}{\ensuremath{#1}}}

\newcommand*{\getylength}[1]{
  \path let \p{y}=(0,1), \n{ylen}={veclen(\x{y},\y{y})}
    in \pgfextra{\xdef#1{\n{ylen}}};
}

\newcommand{\meanderD}[3][.6]{%
\begin{tikzpicture}[scale=#1]
	\getylength{\mylen}
\tikzset{%
medge/.style = {->, line width=.5pt, black}, %mygreen!50!black,
vertex/.style = {line width=.5pt, font=\footnotesize, text=white, minimum size=0.2*\mylen, inner sep=0.05*\mylen}, %, mygreen!50!black
vertex text/.style = {font=\footnotesize, text=blue, minimum size=0.2*\mylen, inner sep=0.1*\mylen} %, mygreen!50!black
}
\foreach \i in {1, ..., #2} {
	\node[vertex] (\i) at (\mylen*\i,0){8}; % invisible character of width "8"
	\node[vertex text] (n\i) at (\mylen*\i,0){$\mathsf\i$}; 
	}

\foreach \i/\j in {#3} {
    \pgfmathsetmacro{\x}{abs(\j-\i)}
    \draw[medge] (\i) to[bend left=40+5*\x] (\j); 
}
\end{tikzpicture}
}

\newcommand{\meanderDUnderOverlay}[5][.6]{%
\begin{tikzpicture}[scale=#1]
	\getylength{\mylen}
\tikzset{%
medge/.style = {->, line width=.5pt, black}, %mygreen!50!black,
vertex/.style = {line width=.5pt, font=\footnotesize, text=white, minimum size=0.2*\mylen, inner sep=0.05*\mylen}, %, mygreen!50!black
vertex text/.style = {font=\footnotesize, text=blue, minimum size=0.2*\mylen, inner sep=0.1*\mylen} %, mygreen!50!black
}
{#4}
\foreach \i in {1, ..., #2} {
	\node[vertex] (\i) at (\mylen*\i,0){8}; % invisible character of width "8"
	\node[vertex text] (n\i) at (\mylen*\i,0){$\mathsf\i$}; 
	}

\foreach \i/\j in {#3} {
    \pgfmathsetmacro{\x}{abs(\j-\i)}
    \draw[medge] (\i) to[bend left=40+5*\x] (\j); 
}
{#5}
\end{tikzpicture}
}

\newcommand{\maxmeander}[2][.6]{%
\begin{tikzpicture}[scale=#1]
	\getylength{\mylen}
\tikzset{%
medge/.style = {-, line width=.5pt, black}, %mygreen!50!black,
vertex/.style = {line width=.5pt, font=\footnotesize, text=white, minimum size=0.2*\mylen, inner sep=0.05*\mylen},
vertex text/.style = {font=\footnotesize, text=blue, minimum size=0.2*\mylen, inner sep=0.1*\mylen} %, mygreen!50!black
}
\foreach \p/\q in {#2} {  % there should only be one!
    \pgfmathsetmacro{\n}{\p + \q}
    \pgfmathtruncatemacro{\nn}{floor(\n/2)}    
    \pgfmathtruncatemacro{\pp}{floor(\p/2)}    
    \pgfmathtruncatemacro{\qq}{floor(\q/2)}    
    \foreach \i in {1, ..., \n} {
       \node[vertex] (\i) at (\mylen*\i,0){8}; % invisible character of width "8"
	   \node[vertex text] (n\i) at (\mylen*\i,0){$\mathsf\i$}; 
	}

    \foreach \i [evaluate=\i as \j using int(\n+1-\i)] in {1, ..., \nn} {
        \draw[medge] (\i) to[bend left=75] (\j);
    }

    \foreach \i [evaluate=\i as \j using int(\p+1-\i)] in {1, ..., \pp} {
        \draw[medge] (\i) to[bend right=75] (\j); 
    }

    \foreach \i [evaluate=\i as \ii using int(\p+\i), 
                 evaluate=\i as \jj using int(\n+1-\i)] in {1, ..., \qq} {
        \draw[medge] (\ii) to[bend right=75] (\jj); 
    }
    }
\end{tikzpicture}
}

\newcommand{\maxmeanderLower}[2][.6]{%
\begin{tikzpicture}[scale=#1]
	\getylength{\mylen}
\tikzset{%
medge/.style = {-, line width=.5pt, black}, %mygreen!50!black,
vertex/.style = {line width=.5pt, font=\footnotesize, text=white, minimum size=0.2*\mylen, inner sep=0.05*\mylen},
vertex text/.style = {font=\footnotesize, text=blue, minimum size=0.2*\mylen, inner sep=0.1*\mylen} %, mygreen!50!black
}
\foreach \p/\q in {#2} {  % there should only be one!
    \pgfmathsetmacro{\n}{\p + \q}
    \pgfmathtruncatemacro{\nn}{floor(\n/2)}    
    \pgfmathtruncatemacro{\pp}{floor(\p/2)}    
    \pgfmathtruncatemacro{\qq}{floor(\q/2)}    
    \foreach \i in {1, ..., \n} {
       \node[vertex] (\i) at (\mylen*\i,0){8}; % invisible character of width "8"
	   \node[vertex text] (n\i) at (\mylen*\i,0){$\mathsf\i$}; 
	}

    \foreach \i [evaluate=\i as \j using int(\n+1-\i)] in {1, ..., \nn} {
        \draw[medge] (\i) to[bend right=75] (\j);
    }

    \foreach \i [evaluate=\i as \j using int(\p+1-\i)] in {1, ..., \pp} {
        \draw[medge] (\i) to[bend left=75] (\j); 
    }

    \foreach \i [evaluate=\i as \ii using int(\p+\i), 
                 evaluate=\i as \jj using int(\n+1-\i)] in {1, ..., \qq} {
        \draw[medge] (\ii) to[bend left=75] (\jj); 
    }
    }
    \end{tikzpicture}
}

\newcommand{\MaxmeanderLower}[3][.6]{%
\begin{tikzpicture}[scale=#1]
	\getylength{\mylen}
\tikzset{%
medge/.style = {-, line width=.5pt, black}, %mygreen!50!black,
vertex/.style = {line width=.5pt, font=\footnotesize, text=white, minimum size=0.1*\mylen, inner sep=0.02*\mylen},
vertex text/.style = {font=\footnotesize, text=blue, minimum size=0.2*\mylen, inner sep=0.1*\mylen} %, mygreen!50!black
}
\foreach \p/\q in {#2} {  % there should only be one!
    \pgfmathtruncatemacro{\n}{\p + \q}
    \pgfmathtruncatemacro{\px}{\p + 1}    
    \pgfmathtruncatemacro{\qx}{\q + 1}    
    \pgfmathtruncatemacro{\nn}{floor(\n/2)}    
    \pgfmathtruncatemacro{\pp}{floor(\p/2)}    
    \pgfmathtruncatemacro{\qq}{floor(\q/2)}    
    \ifthenelse{#3=0}{%
        \foreach \i in {1, ..., \p} {
           \node[vertex] (\i) at (\mylen*\i,0){o}; % invisible character of width "o"
    	   \node[vertex text] (n\i) at (\mylen*\i,0){\!${}_{}\bs\cdot$}; 
    	}
        \foreach \i in {1, ..., \q} {
           \pgfmathtruncatemacro\ix{\p + \i}
           \node[vertex] (\ix) at (\mylen*\ix,0){o}; % invisible character of width "o"
    	   \node[vertex text] (n\ix) at (\mylen*\ix,0){\!$\mathsf\i$}; 
    	}
    }{%
        \foreach \i in {1, ..., \p} {
           \node[vertex] (\i) at (\mylen*\i,0){o}; % invisible character of width "o"
    	   \node[vertex text] (n\i) at (\mylen*\i,0){$\mathsf\i$\!}; 
    	}
        \foreach \i in {\px, ..., \n} {
           \node[vertex] (\i) at (\mylen*\i,0){o}; % invisible character of width "o"
    	   \node[vertex text] (n\i) at (\mylen*\i,0){$\bs\cdot{}_{}$\!}; 
    	}
    }
    \foreach \i [evaluate=\i as \j using int(\n+1-\i)] in {1, ..., \nn} {
        \draw[medge] (\i) to[bend right=75] (\j);
    }

    \foreach \i [evaluate=\i as \j using int(\p+1-\i)] in {1, ..., \pp} {
        \draw[medge] (\i) to[bend left=75] (\j); 
    }

    \foreach \i [evaluate=\i as \ii using int(\p+\i), 
                 evaluate=\i as \jj using int(\n+1-\i)] in {1, ..., \qq} {
        \draw[medge] (\ii) to[bend left=75] (\jj); 
    }
    }
    \end{tikzpicture}
}

\newcommand{\maxmeanderRetracting}[5][.6]{%
\begin{tikzpicture}[scale=#1]
  \getylength{\mylen}
  \tikzset{%
    medge/.style  = {-, line width=.5pt, black},
    vertex/.style = {line width=.5pt, font=#5, text=white,
                     minimum size=0.2*\mylen, inner sep=0.05*\mylen},
    vertex text/.style = {font=#5, text=blue,
                     minimum size=0.1*\mylen, inner sep=0.1*\mylen}
  }
  \foreach \p/\q in {#2} {
    \pgfmathsetmacro{\n}{\p + \q}
    \pgfmathtruncatemacro{\nn}{floor(\n/2)}
    \pgfmathtruncatemacro{\pp}{floor(\p/2)}
    \pgfmathtruncatemacro{\qq}{floor(\q/2)}

    \pgfmathsetmacro{\mylennum}{\mylen/1pt}

    \foreach \i in {1, ..., \p} {
      \node[vertex]      (\i)  at (\mylennum*\i pt, 0) {8};
      \node[vertex text] (n\i) at (\mylennum*\i pt, 0) {$\mathsf{\i}$};
    }
    
    \foreach \j in {1, ..., \q} {
      \pgfmathtruncatemacro{\pj}{\p + \j}
      \node[vertex text] (f\j) at (\mylennum*\pj pt, 0) {\color{blue!40}{$\mathsf{\pj}$}};
    }

    \pgfmathtruncatemacro{\apex}{ceil(\n/2)}
    \foreach \k in {1, ..., \q} {
      \pgfmathtruncatemacro{\idx}{\p + \k}
      \pgfmathsetmacro{\dist}{\idx - \apex}
      \pgfmathsetmacro{\tx}{\mylennum*\apex + (\dist-#4)*\mylennum*cos(#3)}
      \pgfmathsetmacro{\ty}{(\dist-#4)*\mylennum*sin(#3)}
      \node[vertex]      (\idx) at (\tx pt, \ty pt) {8};
      \node[vertex text] (n\idx) at (\tx pt, \ty pt) {\large$\boldsymbol{\cdot}$};
    }

    \foreach \i [evaluate=\i as \j using int(\n+1-\i)] in {1, ..., \nn} {
      \draw[medge] (\i) to[bend left=40+2*(\j-\i)] (\j);
    }

    \foreach \i [evaluate=\i as \j using int(\p+1-\i)] in {1, ..., \pp} {
      \draw[medge] (\i) to[bend right=50+3*(\j-\i)] (\j);
    }

    \foreach \k in {1, ..., \qq} {
      \pgfmathtruncatemacro{\ia}{\p + \k}
      \pgfmathtruncatemacro{\ib}{\p + \q + 1 - \k}
      \pgfmathtruncatemacro{\bendamt}{50+3*(\ib-\ia)}
      \draw[medge] (\ia) to[bend right=\bendamt] (\ib);
    }
  }
\end{tikzpicture}
}
\newcommand{\gammaS}[3][.6]{%
\raisebox{-.5\height}{%
\begin{tikzpicture}[scale=#1]
	\getylength{\mylen}
\tikzset{%
medge/.style = {->, >=latex', line width=.5pt, black}, %mygreen!50!black,
vertex/.style = {shape=circle, draw, line width=.5pt, font=\footnotesize, text=white, minimum size=0.2*\mylen, inner sep=0.1*\mylen}, %, mygreen!50!black
vertex text/.style = {font=\footnotesize, text=black, minimum size=0.2*\mylen, inner sep=0.1*\mylen} %, mygreen!50!black
}
\foreach \i/\j/\x in {#2} {
	\node[vertex] (\x) at (\i,\j){8}; % invisible character of width "8"
	\node[vertex text] (n\x) at (\i,\j){$\mathsf\x$}; % use two nodes to better control vertex size
	}
\foreach \x/\y in {#3} {
	\draw[medge] (\x) to (\y); % or you could bend it with `to[bend left]`
	}
\end{tikzpicture}
}}
\newcommand{\gammaSUnderOverlay}[5][.6]{%
\raisebox{-.5\height}{%
\begin{tikzpicture}[scale=#1]
	\getylength{\mylen}
\tikzset{%
medge/.style = {->, > = latex', line width=.5pt, black}, %mygreen!50!black,
vertex/.style = {shape=circle, draw, fill=white,line width=.5pt, font=\footnotesize, text=white, minimum size=0.2*\mylen, inner sep=0.1*\mylen}, %, mygreen!50!black
vertex text/.style = {font=\footnotesize, text=black, minimum size=0.2*\mylen, inner sep=0.1*\mylen} %, mygreen!50!black
}
#4
\foreach \i/\j/\x in {#2} {
	\node[vertex] (\x) at (\i,\j){8}; % invisible character of width "8"
	\node[vertex text] (n\x) at (\i,\j){$\mathsf\x$}; % use two nodes to better control vertex size
	}
\foreach \x/\y in {#3} {
	\draw[medge] (\x) to (\y); % or you could bend it with `to[bend left]`
	}
#5
\end{tikzpicture}
}}
\newcommand{\coloredgammaS}[3][.6]{%
\raisebox{-.5\height}{%
\begin{tikzpicture}[scale=#1]
	\getylength{\mylen}
\tikzset{%
medge/.style = {->, > = latex', line width=.5pt, black}, %mygreen!50!black,
vertex/.style = {shape=circle, draw, line width=.5pt, font=\footnotesize, minimum size=0.2*\mylen, inner sep=0.1*\mylen}, %, mygreen!50!black
vertex text/.style = {font=\footnotesize, text=black, minimum size=0.2*\mylen, inner sep=0.1*\mylen} %, mygreen!50!black
}
\foreach \i/\j/\x\c in {#2} {
	\node[vertex,fill=\c, text=\c] (\x) at (\i,\j){8}; % invisible character of width "8"
	\node[vertex text] (n\x) at (\i,\j){$\mathsf\x$}; % use two nodes to better control vertex size
	}
\foreach \x/\y in {#3} {
	\draw[medge] (\x) to (\y); % or you could bend it with `to[bend left]`
	}
\end{tikzpicture}
}}
\newcommand{\orderideals}[4][.6]{%
\raisebox{-.5\height}{%
\begin{tikzpicture}[scale=#1]
\tikzset{%
hedge/.style = {-, line width=.5pt, black}, %mygreen!50!black,
vertex/.style = {black, font=\small, fill=white, inner sep=2pt}, %draw=red, line width=.1pt, 
}
\foreach \i/\j/\x/\l in {#3} {
    \StrSubstitute{\l}{,}{{,}}[\ltight]%
    \if#20
        \node[vertex] (\x) at (\i,\j){$\{\mathsf{\l}\}$};
    \else
        \node[vertex] (\x) at (\i,\j){$\mathsf{\ltight}$};
    \fi
    }
\foreach \x/\y in {#4} {
	\draw[hedge] (\x) to (\y); 
	}
\end{tikzpicture}
}}
\newcommand{\coloredorderideals}[4][.6]{%
\raisebox{-.5\height}{%
\begin{tikzpicture}[scale=#1]
\tikzset{%
hedge/.style = {-, line width=.5pt, black}, %mygreen!50!black,
vertex-bg/.style = {white, shape=circle, font=\small, fill=white, inner sep=1pt}, vertex/.style = {black, font=\small, inner sep=1pt}, %draw=red, line width=.1pt, 
}
\foreach \i/\j/\x/\l/\c in {#3} {
   \StrSubstitute{\l}{,}{{,}}[\ltight]%
    \if#20
        \node[vertex-bg, shape=circle,fill=\c] (bg\x) at (\i,\j){\phantom{8}};
        \node[vertex] (\x) at (\i,\j){$\{\mathsf{\l}\}$};
    \else
        \node[vertex-bg, shape=circle,fill=\c] (bg\x) at (\i,\j){\phantom{8}};
        \node[vertex] (\x) at (\i,\j){$\mathsf{\ltight}$};
    \fi
    }
\foreach \x/\y in {#4} {
	\draw[hedge] (\x) to (\y); 
	}
\end{tikzpicture}
}}
\newcommand{\eigenShapeOverlay}[4][.7]{%
\raisebox{-.5\height}{%
\begin{tikzpicture}[scale=#1]
	\getylength{\mylen}
\tikzset{%
blank/.style = {shape=rectangle, font=\tiny, text=gray!80, minimum size=0.5*\mylen, inner sep=0.1*\mylen},
box/.style = {shape=rectangle, line width=.5pt, white, fill=gray!20, draw},
box text/.style = {font=\footnotesize, text=black, minimum size=0.2*\mylen, inner sep=0*\mylen}
}
\foreach \i/\j/\x in {#2} {
	\draw[box] (\j-.5,-\i-.5) rectangle ++(1,1);
	\node[box text] (\i\j) at (\j,-\i){$\x$};
	}
\foreach \i/\j in {#3}
	\node[blank] (\i\j) at (\j,-\i){$\cdot$};
#4
\end{tikzpicture}
}}
\newcommand{\Shape}[4][.7]{%
\begin{tikzpicture}[scale=#1]
	\getylength{\mylen}
\tikzset{%
blank/.style = {shape=rectangle, font=\tiny, text=gray!80, minimum size=0.5*\mylen, inner sep=0.1*\mylen},
box/.style = {shape=rectangle, line width=.5pt, white, fill=gray!20, draw},
box text/.style = {font=\footnotesize, text=black, minimum size=0.2*\mylen, inner sep=0*\mylen}
}
\foreach \i/\j\x in {#2} {
	\draw[box] (\j-.5,-\i-.5) rectangle ++(1,1);
	\node[box text] (\i\j) at (\j,-\i){\gr{\i{\scriptscriptstyle\,}\j}};
	}
\foreach \i/\j in {#3} {
	\draw[box] (\j-.5,-\i-.5) rectangle ++(1,1);
	\ifthenelse{\i=\j}{\node[box text] (\i\j) at (\j,-\i){\bl{\times}};}{\node[box text] (\i\j) at (\j,-\i){$\times$};}
	}
\foreach \i/\j in {#4}
	\node[blank] (\i\j) at (\j,-\i){$\cdot$};
\end{tikzpicture}
}
\newcommand{\ShapeOverlay}[5][.7]{%
\begin{tikzpicture}[scale=#1]
	\getylength{\mylen}
\tikzset{%
blank/.style = {shape=rectangle, font=\tiny, text=gray!80, minimum size=0.5*\mylen, inner sep=0.1*\mylen},
box/.style = {shape=rectangle, line width=.5pt, white, fill=gray!20, draw},
box text/.style = {font=\footnotesize, text=black, minimum size=0.2*\mylen, inner sep=0*\mylen}
}
\foreach \i/\j\x in {#2} {
	\draw[box] (\j-.5,-\i-.5) rectangle ++(1,1);
	\node[box text] (\i\j) at (\j,-\i){\gr{\i{\scriptscriptstyle\,}\j}};
	}
\foreach \i/\j in {#3} {
	\draw[box] (\j-.5,-\i-.5) rectangle ++(1,1);
	\ifthenelse{\i=\j}{\node[box text] (\i\j) at (\j,-\i){\bl{\times}};}{\node[box text] (\i\j) at (\j,-\i){$\times$};}
	}
\foreach \i/\j in {#4}
	\node[blank] (\i\j) at (\j,-\i){$\cdot$};
#5
\end{tikzpicture}
}
\newcommand{\fenceposet}[2][.6]{%
\raisebox{.5\height}{%
\begin{tikzpicture}[scale=#1]
\tikzset{%
fedge/.style = {-, line width=.5pt, black}, %mygreen!50!black,
vertex/.style = {blue, font=\small, fill=white, inner sep=1pt}, %draw=red, line width=.1pt, 
}
    \draw[fedge] (0,0)
    \foreach \I in {#2} {
      \ifdim\I pt > 0pt -- ++(1,1) \else -- ++(1,-1) \fi
    };
    \node[vertex] at (0,0) {$\mathsf1$};
    \coordinate (current) at (0,0);
    \foreach[count=\t from 2] \I in {#2} {
      \ifdim\I pt > 0pt
        \coordinate (current) at ($(current) + (1,1)$);
      \else
        \coordinate (current) at ($(current) + (1,-1)$);
      \fi
      \node[vertex] at (current) {$\mathsf\t$};
    }
  \end{tikzpicture}%
}}
\newcommand{\fenceposetExtended}[3][.6]{%
\raisebox{.5\height}{%
\begin{tikzpicture}[scale=#1]
\tikzset{%
  fedge/.style   = {-, line width=.5pt, black},
  vertex/.style  = {blue, font=\tiny, fill=white, inner sep=1pt},
  fedgeX/.style  = {-, line width=.6pt, magenta!90!black},
  vertexX/.style = {magenta!90!black, font=\tiny, fill=white, inner sep=.5pt},
}
  \coordinate (F1) at (0,0);
  \foreach \I [count=\k from 2] in {#2} {
    \pgfmathtruncatemacro{\kprev}{\k-1}
    \ifdim\I pt > 0pt
      \coordinate (F\k) at ($(F\kprev)+(1,1)$);
    \else
      \coordinate (F\k) at ($(F\kprev)+(1,-1)$);
    \fi
    \global\edef\maincount{\k}%
  }
  \pgfmathtruncatemacro{\extk}{\maincount+1}
  \ifdim#3pt > 0pt
    \coordinate (F\extk) at ($(F\maincount)+(1,1)$);
  \else
    \coordinate (F\extk) at ($(F\maincount)+(1,-1)$);
  \fi
  \draw[fedge] (F1)
    \foreach \k in {2,...,\maincount} { -- (F\k) };
  \draw[fedgeX] (F\maincount) -- (F\extk);
  \foreach \k in {1,...,\maincount} {
    \node[vertex] at (F\k) {$\mathsf{\k}$};
  }
  \node[vertexX,shape=circle] at (F\extk) {$\mathsf{\extk}$};
\end{tikzpicture}%
}}% try: \fenceposetExtended{1,-1,-1,1,1,1}{-1}

\def\skylineboxsize{0.475}   % cm ? side length of each unit box
\def\skylinegap{0.125}       % cm ? gap between adjacent boxes

\newcommand{\fskyline}[1]{%
  \begin{tikzpicture}[
      x=1cm, y=1cm,
      box/.style={
        draw=black, line width=0.6pt,
        fill=white,
        minimum size=\skylineboxsize cm,
        inner sep=0pt,
      },
      baseline=(current bounding box.south),
  ]
    \foreach \Labels [count=\xi from 0] in {#1} {%
      \foreach \lbl [count=\yi from 1] in \Labels {%
        \pgfmathsetmacro{\bx}{\xi*(\skylineboxsize+\skylinegap)}%
        \pgfmathsetmacro{\by}{(\yi-1)*(\skylineboxsize+\skylinegap)}%
        \node[box] (\lbl) at (\bx,\by) {\small\lbl};%
      }%
    }%
  \end{tikzpicture}%
}

\newcommand{\fcskyline}[1]{%
  \begin{tikzpicture}[
      x=1cm, y=1cm,
      box/.style={draw=black, line width=0.6pt,
                  minimum size=\skylineboxsize cm, inner sep=0pt},
      baseline=(current bounding box.south),
  ]
    \foreach \Pairs [count=\xi from 0] in {#1} {%
      \foreach \lbl/\col [count=\yi from 1] in \Pairs {%
        \pgfmathsetmacro{\bx}{\xi*(\skylineboxsize+\skylinegap)}%
        \pgfmathsetmacro{\by}{(\yi-1)*(\skylineboxsize+\skylinegap)}%
        \node[box, fill=\col!30, name=n-\xi-\yi] at (\bx,\by) {\small\lbl};
        }%
    }%
  \end{tikzpicture}%
}

\newcommand{\fcskylineOverlay}[2]{%
  \begin{tikzpicture}[
      x=1cm, y=1cm,
      box/.style={draw=black, line width=0.6pt,
                  minimum size=\skylineboxsize cm, inner sep=0pt},
      baseline=(current bounding box.south),
  ]
    \foreach \Pairs [count=\xi from 0] in {#1} {%
      \foreach \lbl/\col [count=\yi from 1] in \Pairs {%
        \pgfmathsetmacro{\bx}{\xi*(\skylineboxsize+\skylinegap)}%
        \pgfmathsetmacro{\by}{(\yi-1)*(\skylineboxsize+\skylinegap)}%
        \node[box, fill=\col!30, name=n-\xi-\yi] at (\bx,\by) {\small\lbl};
      }%
    }%
    #2
  \end{tikzpicture}%
}

%% file: figures/dim_L23,41.tex
% =====================================================================
% Part 1 – EigenShape for L(2,3 | 4,1) in gl(5)
% Shape:
%   rows 1–2 : cols 1–2
%   rows 3–4 : cols 1–5
%   row  5   : col  5
% Level function:
%   lev(4)=0, lev(5)=0, lev(1)=1, lev(3)=1, lev(2)=2
% Directed edges (lower -> higher level):
%   4->1, 1->2, 3->2, 5->3
% wt(j,k) = lev(k) - lev(j)
% \bgr{1} = edge position with wt=+1
% \rd{}   = negative weight
% \bl{0}  = diagonal zero
% =====================================================================
\eigenShapeOverlay[.7]{%
  % ---- rows 1–2, cols 1–2 ----
  1/1/\bl{0},    1/2/\bgr{1},%
  2/1/\rd{-1},   2/2/\bl{0},%
  % ---- rows 3–4, cols 1–5 ----
  3/1/0,         3/2/\bgr{1},  3/3/\bl{0},   3/4/\rd{-1},  3/5/\bgr{1},%
  4/1/\bgr{1},   4/2/2,        4/3/1,        4/4/\bl{0},   4/5/2,%
  % ---- row 5, col 5 ----
  5/5/\bl{0}%
}{%
  % dots: outside the shape
  % rows 1–2, cols 3–5
  1/3, 1/4, 1/5,%
  2/3, 2/4, 2/5,%
  % row 5, cols 1–4
  5/1, 5/2, 5/3, 5/4%
}{%
\fill[magenta!90, nearly transparent] (3,-4) circle (.55);
}
\quad\quad\quad
%
% =====================================================================
% Part 2 – Directed graph \gammaS
% Levels (higher = higher y):
%   lev 2 : node 2          y=4
%   lev 1 : nodes 1, 3      y=2
%   lev 0 : nodes 4, 5      y=0
% Directed edges: 4->1, 1->2, 3->2, 5->3
% =====================================================================
\raisebox{-.55\height}{\meanderDUnderOverlay[.9]{5}{1/2, 4/1, 3/5, 3/2}{}{%
\draw[magenta!80, nearly transparent, line width=3.25pt, shorten >=-.5pt] (4) to[bend left=40+5*3] (1);
\draw[magenta!80, nearly transparent, line width=3.25pt, shorten >=-.5pt] (1) to[bend left=40+5*1] (2);
\draw[magenta!80, nearly transparent, line width=3.25pt, shorten >=-.5pt] (3) to[bend left=40+5*1] (2);
\fill[magenta!80, nearly transparent] (4,0) circle (.27);
\fill[magenta!80, nearly transparent] (3,0) circle (.27);
}}
\quad
\gammaSUnderOverlay[.7]{%
  0/0/2,  2/0/5,%
  0/2/1,  2/2/3,%
  0/4/4%
}{%
  4/1, 1/2, 3/2, 3/5%
}{%
%\draw[magenta!80, nearly transparent, line width=3.75pt] (0,4) -- (0,2) -- (0,0) -- (2,2);
}{%
\fill[magenta!80, nearly transparent] (0,4) circle (.52);
\fill[magenta!80, nearly transparent] (2,2) circle (.52);
}%
\quad\quad\quad
%
% =====================================================================
% Part 3 – Frequency table of path weights
% λ:  -1   0   1   2
% d:   3   7   4   1
% =====================================================================
\begin{tabular}{c|rrrr}
$\lambda$   & $-1$ & $0$ & $1$ & $2$ \\
\hline
$d_\lambda$ &    2 &   5 &   5 &   2
\end{tabular}

%% file: figures/panyushev-reduction.tex
\begin{gather*}
\raisebox{-.5\height}{\ShapeOverlay[.7]{%
1/11, 2/10, 3/9, 4/8, 5/7, 
7/1, 6/2, 5/3,
11/8, 10/9}{%
    1/1, 1/2, 1/3, 1/4, 1/5, 1/6, 1/7, 1/8, 1/9, 1/10,     
    2/1, 2/2, 2/3, 2/4, 2/5, 2/6, 2/7, 2/8, 2/9,       2/11,
    3/1, 3/2, 3/3, 3/4, 3/5, 3/6, 3/7, 3/8,      3/10, 3/11,
    4/1, 4/2, 4/3, 4/4, 4/5, 4/6, 4/7,      4/9, 4/10, 4/11,
    5/1, 5/2,      5/4, 5/5, 5/6,      5/8, 5/9, 5/10, 5/11,
    6/1,      6/3, 6/4, 6/5, 6/6, 6/7, 6/8, 6/9, 6/10, 6/11,
         7/2, 7/3, 7/4, 7/5, 7/6, 7/7, 7/8, 7/9, 7/10, 7/11,
                                       8/8, 8/9, 8/10, 8/11,
                                       9/8, 9/9, 9/10, 9/11,
                                      10/8,     10/10,10/11,
                                           11/9,11/10,11/11}{%
    8/1, 8/2, 8/3, 8/4, 8/5, 8/6, 8/7,
    9/1, 9/2, 9/3, 9/4, 9/5, 9/6, 9/7,
    10/1, 10/2, 10/3, 10/4, 10/5, 10/6, 10/7,
    11/1, 11/2, 11/3, 11/4, 11/5, 11/6, 11/7}{%
    %% reduced seaweed outline
    \draw[myblue, line width=1pt, dashed] 
        (.5,-.5) -- (4.5, -.5) -- (4.5,-4.5) -- (7.5,-4.5)
                 -- (7.5,-7.5) -- (.5,-7.5) -- cycle;%
    }}
\quad
\longmapsto
\quad
\raisebox{-.5\height}{\Shape[.7]{%
1/4, 2/3, 5/7, 7/1, 6/2, 5/3}{%
    1/1, 1/2, 1/3, 
    2/1, 2/2, 2/4, 
    3/1, 3/2, 3/3, 3/4,
    4/1, 4/2, 4/3, 4/4, 
    5/1, 5/2, 5/4, 5/5, 5/6, 
    6/1, 6/3, 6/4, 6/5, 6/6, 6/7, 
    7/2, 7/3, 7/4, 7/5, 7/6, 7/7}{%
    1/5, 1/6, 1/7, 
    2/5, 2/6, 2/7,
    3/5, 3/6, 3/7,
    4/5, 4/6, 4/7%
    }}
\\
\raisebox{-.33\height}{\maxmeander[.6]{7/4}} 
    \quad \leadsto \quad
\raisebox{-.3\height}{\maxmeanderRetracting[.35]{7/4}{35}{.4}{\tiny}}
    \quad \leadsto \quad
\raisebox{-.5\height}{\maxmeanderLower[.6]{4/3}}
\end{gather*}

%% file: figures/panyushev_move.tex
\begin{tabular}{@{}c@{}c@{}}
\maxmeander[.52]{2/3}
&
\raisebox{.25\height}{%
\gammaS[.52]{0/4/1, 0/6/2, 0/2/5, 2/4/4, 0/0/3}{1/5, 2/4, 2/1, 5/3}}
\end{tabular}
\quad $\leadsto$ \quad 
\begin{tabular}{c@{}c}
\scalebox{.65}{\MaxmeanderLower[.5]{3/5}{0}}
&
\scalebox{.65}{\raisebox{.6\height}{%
\coloredgammaS[.5]{0/4/1/white, 0/6/2/white, 0/2/5/white, 2/4/4/white, 0/0/3/white, 2/2/4'/red!40, 2/0/5'/red!40, 2/-2/3'/red!40}{1/5, 2/4, 2/1, 5'/3', 5/5', 4/4', 3/3'}}}
\end{tabular}
\quad $\leadsto$ \quad 
\begin{tabular}{c@{}c}
\maxmeander[.55]{5/3}
&
\raisebox{.35\height}{%
\coloredgammaS[.52]{0/8/4/white, 0/6/5/white, 0/4/1/white, 2/6/2/white, 0/2/3/white, 2/2/8/red!40, 2/4/7/red!40, 2/0/6/red!40}{4/5, 5/1, 1/8, 8/6, 4/2, 2/7, 3/6}}
\end{tabular}

%% file: figures/orderideal_poset.tex
\fenceposet[.8]{-1,1}
\hskip5em
\raisebox{.6\height}{%
\orderideals[.5]{1}{1.5/4/12/{1,2}, 0/6/123/{1,2,3}, 0/2/2/{2}, -1.5/4/23/{2,3}, -1.5/0/e/{\emptyset}}{2/e, 12/2, 23/2, 123/12, 123/23}}
\raisebox{-.2\height}{%
\begin{tikzpicture}[scale=.5]
\node at (0,6) {\color{red}$1$};
\node at (0,4) {\color{red}$2$};
\node at (0,2) {\color{red}$1$};
\node at (0,0) {\color{red}$1$};
\end{tikzpicture}}
\hskip5em
\raisebox{2.2\height}{%
${\color{red}1}t^3 \ + \ {\color{red}2}t^2 \ + \ {\color{red}1}t \ + \ 
{\color{red}1}$}

%% file: figures/fenceposet_move.tex
\begin{tabular}{c@{\quad\quad}c}
\fenceposet[.8]{-1,1}
&
\raisebox{0\height}{%
\coloredorderideals[.5]{1}{1.5/4/12/{1,2}/red!40, 0/6/123/{1,2,3}/white, 0/2/2/{2}/red!40, -1.5/4/23/{2,3}/white, -1.5/0/e/{\emptyset}/red!40}{2/e, 12/2, 23/2, 123/12, 123/23}}
\end{tabular}
\quad\quad $\xmapsto{\ \ \ \pD \ \ \ }$ \quad\quad
\raisebox{-.2\height}{%
\begin{tabular}{c@{\quad\quad}c}
\fenceposet[.8]{-1,1,-1}
&
\raisebox{-.2\height}{%
\coloredorderideals[.5]{1}{1.5/4/124/{1,2,4}/white, 0/6/1234/{1,2,3,4}/white, 0/2/24/{2,4}/white, -1.5/4/234/{2,3,4}/white, -1.5/0/4/{4}/white, 3/2/12/{1,2}/red!40, 1.5/0/2/{2}/red!40, 0/-2/e/{\emptyset}/red!40}{1234/234, 1234/124, 2/e, 12/2, 234/24, 124/12, 124/24, 4/e, 24/4, 24/2}}
\end{tabular}}

%% file: figures/panyushev_treeleft_trimmed.tex
\begin{tikzpicture}[level 1/.style={sibling distance=22em, level distance=15em}]
    \coordinate (sw) at (-10,2);
    \coordinate (ne) at (7.5,-9.8);
    \begin{scope}
        % \draw (sw) rectangle (ne);
       \clip  (sw) rectangle (ne); % clip some region
\node {% new root (formerly right child)
$\begin{array}{c@{\ \ }c}
\begin{array}{c}
\maxmeander[.65]{2/5}
\\
%\raisebox{.25\height}{%
\dfrac{{t}^4 + {t}^3}{{t}^3 + 2{t}^2 + {t} + 1}%}
\end{array}
&
\raisebox{-.2\height}{%
\gammaS[.45]{2/8/2, 0/6/1, 0/4/7, 0/2/3, 0/0/5, 2/4/4, 2/6/6}{2/1, 1/7, 7/3, 3/5, 2/6, 6/4}}
\qquad\qquad\qquad\qquad \end{array}$}
child {
node {%left child
$\begin{array}{c@{\ \ }c}
%(5/8):133321
\dfrac{{t}^5 + {t}^4}{{t}^4 + 2{t}^3 + 2{t}^2 + {t} + 1}
&
\raisebox{.5\height}{% 
\coloredgammaS[.45]{2/8/2/blue!20, 0/6/1/blue!20, 2/6/8/white, 
0/4/9/white, 2/4/4/white, 
0/2/3/white, 2/2/6/white, 
0/0/7/white, 
0/-2/5/white}{2/1, 1/9, 9/3, 3/7, 7/5, 2/8, 8/4, 4/6}}
\qquad\qquad\qquad\end{array}$
}
edge from parent[->, thick,blue]
node[left, xshift=2em, yshift=1em] {$\raisebox{-.2\height}{\MaxmeanderLower[.3]{7/2}{1}} \quad {\pL}$}
}
child {node {%right child
$\begin{array}{c@{\ \ }c}
%(8/3):122321
\dfrac{{t}^5 + 2{t}^4 + 2{t}^3 + {t}^2 + {t}}{{t}^3 + 2{t}^2 + {t} + 1}
&
\raisebox{.5\height}{%
\coloredgammaS[.45]{2/10/6/white, 0/8/7/white, 2/8/2/white, 
0/6/1/white, 2/6/4/white, 
0/4/5/white, 
0/2/3/white, 
4/6/11/red!40, 4/4/9/red!40, 2/4/12/red!40, 2/2/8/red!40, 0/0/10/red!40}{6/2, 6/7, 7/1, 5/3, 2/11, 4/9, 11/9, 1/12, 12/8, 5/8, 3/10}}
\end{array}\qquad\qquad\qquad\qquad\qquad\qquad$
}
edge from parent[->, thick,red, pos=.55]
node[right, xshift=-2.25em, yshift=1.2em] {${\pR} \quad \ \ \raisebox{-.25\height}{\MaxmeanderLower[.3]{5/7}{0}}$}
};
\end{scope}
\end{tikzpicture}

%% file: figures/fenceposet_treetrimmed.tex
\begin{tikzpicture}[level 1/.style={sibling distance=20em, level distance=15em}]
    \coordinate (sw) at (-11.6,2.4);
    \coordinate (ne) at (7.4,-9.9);
    \begin{scope}
        % \draw (sw) rectangle (ne);
       \clip  (sw) rectangle (ne); % clip some region
\node {% new root (formerly right child)
$\begin{array}{c@{\!\!}c}
\begin{array}{c}
\fenceposet[.7]{-1,1,-1}
\\[-.5ex]
%\raisebox{.25\height}{%
\dfrac{{t}^4 + 2{t}^3 + {t}^2 + {t}}{{t}^2 + {t} + 1}%}
\end{array}
&
\raisebox{0.1\height}{%
\coloredorderideals[.5]{1}{1.5/4/124/{1,2,4}/white, 0/6/1234/{1,2,3,4}/white, 0/2/24/{2,4}/white, -1.5/4/234/{2,3,4}/white, -1.5/0/4/{4}/white, 3/2/12/{1,2}/red!40, 1.5/0/2/{2}/red!40, 0/-2/e/{\emptyset}/red!40}{1234/234, 1234/124, 2/e, 12/2, 234/24, 124/12, 124/24, 4/e, 24/4, 24/2}}
\qquad\qquad\qquad\qquad \end{array}$}
child {
node {%left child
$\begin{array}{c@{\!\!\!\!\!\!}c}
%(5/8):133321
\dfrac{{t}^5 + 2{t}^4 + {t}^3 + {t}^2}{{t}^4 + 2{t}^3 + 2{t}^2 + 2{t} + 1}
&
\raisebox{.6\height}{% 
\coloredorderideals[.5]{1}{-2.25/6/1245/{1245}/blue!20, -3.75/8/12345/{12345}/blue!20, -3.75/4/245/{245}/blue!20, -5.25/6/2345/{2345}/blue!20, -5.25/2/45/{45}/blue!20, 1.5/4/124/{124}/white, 0/6/1234/{1234}/white, 0/2/24/{24}/white, -1.5/4/234/{234}/white, -1.5/0/4/{4}/white, 3/2/12/{12}/white, 1.5/0/2/{2}/white, 0/-2/e/{\emptyset}/white}{12345/1234, 12345/2345, 12345/1245, 1245/124, 1245/245, 45/4, 245/45, 2345/245, 2345/234, 245/24, 1234/234, 1234/124, 124/12, 124/24, 4/e, 24/4, 24/2, 2/e, 12/2, 234/24}}
\qquad\qquad\qquad\qquad\quad\end{array}$
}
edge from parent[->, thick,blue]
node[pos=0.45, xshift=-1.2em]
{$\tikz[baseline] \node[anchor=center, inner sep=0pt]
{\hspace{-0.6em}\fenceposetExtended[.37]{-1,1,-1}{1}};\qquad\raisebox{-1.0\height}{$\pU$}$%
}
}
child {node {%right child
$\begin{array}{c@{\!\!\!}c}
%(8/3):122321
\dfrac{{t}^5 + 2{t}^4 + 2{t}^3 + 2{t}^2 + {t}}{{t}^2 + {t} + 1}
&
\raisebox{.6\height}{%
\coloredorderideals[.5]{1}{0/6/12345/{12345}/white, 1.5/4/1245/{1245}/white, 0/2/245/{245}/white, -1.5/4/2345/{2345}/white, -1.5/0/45/{45}/white, 3/2/125/{125}/white, 1.5/0/25/{25}/white, 0/-2/5/{5}/white, 4.5/0/12/{12}/red!40, 3/-2/2/{2}/red!40, 1.5/-4/e/{\emptyset}/red!40}{12345/2345, 12345/1245, 2345/245, 1245/245, 1245/125, 125/25, 125/12, 245/25, 245/45, 12/2, 25/5, 45/5, 25/2, 2/e, 5/e}}
\qquad\qquad\qquad\quad\end{array}$
}
edge from parent[->, thick,red]
node[pos=0.35, xshift=1.2em]
{$\raisebox{-1.5\height}{$\pD$}\quad\tikz[baseline] \node[anchor=center, inner sep=0pt]
{\hspace{0.6em}\fenceposetExtended[.37]{-1,1,-1}{-1}};$%
}
};
\end{scope}
\end{tikzpicture}

%% file: figures/three_triples.tex
%\skyline{1,2,1,1}
\fcskyline{{/white,/red},{/white,/red, /red},{/white,/red,/white,/white}}
\quad\quad
%\fskyline{{4},{2,5},{1,7},{3,8},{6}}
\fcskyline{{/white,/red,/red},{/white,/red,/red,/white},{/white,/red,/white}}
\quad\quad\quad\quad
\fcskylineOverlay{%
{7/white},
{4/white,17/red},
{6/white,20/red,10/white},
{1/white,14/red,5/white,8/white,18/red},
{3/white,16/red,9/white,19/red,23/red},
{2/white,11/red,15/red,21/red},
{13/red,22/red},
{12/red}}{%
\tikzset{myedge/.style={very thick, shorten <=-.5pt,shorten >=-.5pt,blue!70}}
\draw[myedge] (n-0-1) -- (n-1-1);
\draw[myedge] (n-1-1) -- (n-2-1);
\draw[myedge] (n-2-1) -- (n-3-1);
\draw[myedge] (n-3-1) -- (n-4-1);
\draw[myedge] (n-4-1) -- (n-5-1);
\draw[myedge] (n-5-1) -- (n-6-1);
\draw[myedge] (n-6-1) -- (n-7-1);
\draw[myedge] (n-1-2) -- (n-2-2);
\draw[myedge] (n-2-2) -- (n-3-2);
\draw[myedge] (n-3-2) -- (n-4-2);
\draw[myedge] (n-4-2) -- (n-5-2);
\draw[myedge] (n-5-2) -- (n-6-2);
\draw[myedge] (n-2-3) -- (n-3-3);
\draw[myedge] (n-3-3) -- (n-4-3);
\draw[myedge] (n-4-3) -- (n-5-3);
\draw[myedge] (n-3-4) -- (n-4-4);
\draw[myedge] (n-4-4) -- (n-5-4);
\draw[myedge] (n-3-5) -- (n-4-5);}

%% file: figures/building-a-filling.tex
    \begin{tikzpicture}[x=0.6cm,y=0.6cm]
        \node[anchor=south west] (P) at (0,0) {%
        \fcskylineOverlay{%
{\sfA/white},
{\sfA/white,\sfB/red},
{\sfA/white,\sfB/red,\sfA/white},
{\sfA/white,\sfB/red,\sfA/white,\sfA/white,\sfB/red},
{\sfA/white,\sfB/red,\sfA/white,\sfB/red,\sfB/red},
{/gray,/gray,/gray,/gray},
{/gray!40!white,/gray!40!white},
{/gray!40!white}}{%
\tikzset{myedge/.style={very thick, shorten <=-.5pt,shorten >=-.5pt,blue!70}}
\draw[myedge] (n-0-1) -- (n-1-1);
\draw[myedge] (n-1-1) -- (n-2-1);
\draw[myedge] (n-2-1) -- (n-3-1);
\draw[myedge] (n-3-1) -- (n-4-1);
% \draw[myedge] (n-4-1) -- (n-5-1);
% \draw[myedge] (n-5-1) -- (n-6-1);
% \draw[myedge] (n-6-1) -- (n-7-1);
%
\draw[myedge] (n-1-2) -- (n-2-2);
\draw[myedge] (n-2-2) -- (n-3-2);
\draw[myedge] (n-3-2) -- (n-4-2);
\draw[myedge] (n-4-2) -- (n-5-2);
% \draw[myedge] (n-5-2) -- (n-6-2);
%
\draw[myedge] (n-2-3) -- (n-3-3);
\draw[myedge] (n-3-3) -- (n-4-3);
% \draw[myedge] (n-4-3) -- (n-5-3);
%
\draw[myedge] (n-3-4) -- (n-4-4);
\draw[myedge] (n-4-4) -- (n-5-4);
\draw[myedge] (n-3-5) -- (n-4-5);}
};
\node (col0) at (0.6,-.2) {\tiny$\color{orange}\sf 0$};
\node (col1) at (1.6,-.2) {\tiny$\color{orange}\sf 1$};
\node (col2) at (2.6,-.2) {\tiny$\color{orange}\sf 2$};
\node (col3) at (3.6,-.2) {\tiny$\color{orange}\sf 3$};
\node (col4) at (4.6,-.2) {\tiny$\color{orange}\sf 4$};
\node (col5) at (5.6,-.2) {\tiny$\color{orange}\sf 5$};
\node (col6) at (6.6,-.2) {\tiny$\color{orange}\sf 6$};
\node (col7) at (7.6,-.2) {\tiny$\color{orange}\sf 7$};
\node (row0) at (-.2,0.6) {\tiny$\color{orange}\sf 1$};
\node (row1) at (-.2,1.6) {\tiny$\color{orange}\sf 2$};
\node (row2) at (-.2,2.6) {\tiny$\color{orange}\sf 3$};
\node (row3) at (-.2,3.6) {\tiny$\color{orange}\sf 4$};
\node (row4) at (-.2,4.6) {\tiny$\color{orange}\sf 5$};
    \end{tikzpicture}

%% file: figures/Ralpha,beta.tex
%$\L(4,9,4\,|\,8,2,7)$

% \raisebox{-.4\height}{\meanderS{17}{
% 1/4, 5/13, 14/17,
% 2/3, 6/12, 7/11, 8/10, 15/16,
% 8/1, 10/9, 17/11,
% 7/2, 6/3, 5/4, 16/12, 15/13}}
% \quad\quad\quad    
\gammaS[.5]{%
    0/6/8, 12/6/14, 
    0/4/10, 2/4/1, 4/4/5, 6/4/15, 10/4/7, 12/4/17,  
    0/2/9, 2/2/4, 4/2/13, 6/2/16, 8/2/6, 10/2/2, 12/2/11, 
    6/0/12, 10/0/3%
    }{8/1, 8/10, 14/17, 1/4, 5/4, 5/13, 15/13, 15/16, 10/9, 7/2, 7/11, 17/11, 6/12, 6/3, 16/12, 2/3}
\ 
\raisebox{-.5\height}{%
\begin{tikzpicture}[scale=.5]
\node at (0,6) {\color{red}$2$};
\node at (0,4) {\color{red}$6$};
\node at (0,2) {\color{red}$7$};
\node at (0,0) {\color{red}$2$};
\end{tikzpicture}}
%    $R_{(4,9,4|8,2,7)} = 2 + 6{t} + 7{t}^2 + 2{t}^3$